\def\namedlabel#1#2{\begingroup
    #2%
    \def\@currentlabel{#2}%
    \phantomsection\label{#1}\endgroup
}
\crefname{hypothesis}{Hypothesis}{Hypotheses}
\title{Blackwell optimality and policy stability for long-run risk sensitive stochastic control}
 \author{
Nicole B{\"a}uerle\thanks{Department of Mathematics, Karlsruhe Institute of Technology (KIT), Karlsruhe, Germany (\email{nicole.baeuerle@kit.edu}).}
 \and
Marcin Pitera\thanks{Institute of Mathematics, Jagiellonian University, Krakow, Poland
  (\email{marcin.pitera@uj.edu.pl});  research supported by NCN grant 2020/37/B/ST1/00463.}
  \and
{\L}ukasz Stettner\thanks{Institute of Mathematics, Polish Academy of Sciences, Warsaw, Poland
  (\email{l.stettner@impan.pl});  research supported by NCN grant 2020/37/B/ST1/00463.}}
\DeclareMathOperator{\ent}{\textnormal{Ent}}  
\DeclareMathOperator*{\argmax}{arg\,max}
\def\cF{\mathcal{F}}
\def\bP{\mathbb{P}}
\def\bE{\mathbb{E}}
\def\bR{\mathbb{R}}
\def\bQ{\mathbb{Q}}
\def\bN{\mathbb{N}}
\def\cP{\mathcal{P}}
\DeclareMathOperator{\dif}{d \!}
\def\namedlabel#1#2{\begingroup
    #2%
    \def\@currentlabel{#2}%
    \phantomsection\label{#1}\endgroup
}
\newtheorem{example}{Example}[section]
\begin{document}

\maketitle

\begin{abstract}
This paper analyzes the stability of optimal policies in the long-run stochastic control framework with an averaged risk-sensitive criterion for discrete-time MDPs on finite state-action space. In particular, we study the robustness of optimal controls when perturbations to the risk-aversion parameter are applied, and investigate the Blackwell property, together with its link to the risk-sensitive vanishing discount approximation framework. Finally, we present examples that help to better understand the intricacies of the risk-sensitive control framework.
\end{abstract}

\begin{keywords}
risk-sensitive stochastic control, long time horizon, Markov Decision Process, entropic utility, Blackwell optimality, vanishing discount, span-contraction approach
\end{keywords}

\begin{AMS}
60J05, 60J35, 90C39, 90C40, 93C55, 93E20 
\end{AMS}

\section{Introduction}
Consider a classical discrete-time Markov Decision Process (MDP) long-run framework with a running reward functional, see~\cite{BauRie2011}. The {\em entropic risk sensitive} averaged criterion, together with its discounted version, is arguably the most popular MDP objective function choice if one is interested in the determination of the optimal control policy taking into account performance risk. For basic results about risk-sensitive optimal controls we refer to \cite{howard1972risk,Jaq1973,Whi1990,BraCavFer1998,DiMSte1999,CavFer2000,CavHer2009,chavez2015continuity,CavHer2017,CavCru2017,Ste2023,Ste2024,PitSte2024} and references therein; see also \cite{AsiJas2017,bauerle2014more,BisBor2023,bauerle2024markov} for extensions and alternatives.

In a nutshell, the entropic risk sensitive criterion is a non-linear extensions of the classical risk-neutral mean criterion in which the risk sensitivity parameter $\gamma\neq 0$ is encoded in the entropic utility $\frac{1}{\gamma}\ln \bE\left[e^{\gamma Z} \right]$ applied to the cumulative reward $Z$; in the averaged setup, the limit of averaged entropic utility over time is considered. The popularity of the entropic criterion could be justified by the fact that this mapping satisfies many useful properties linked to additivity, strong-time consistency (tower rule property), certainty equivalent representation, or dual relative entropy based characterisation, see \cite{KupSch2009,BiePli2003} for details. Furthermore, when $\gamma \to 0$, we obtain in the limit the classical risk-neutral criterion in which entropic utility is replaced by the expectation. Of course, the optimal policy depends on the corresponding risk-aversion parameter $\gamma$ choice or the optimal policy determination logic making the framework sensitive to eventual aversion changes or policy approximation schemes. 

In this paper, we analyse the stability of the averaged optimal policies. We restrict our study to MDPs with finite state and action space (as typically done in practical implementations) and assume that for each decision rule the implied Markov chain is irreducible. Apart from the direct analysis of risk-aversion specification stability, we are also interested in the {\em Blackwell property} of the approximating discounted strategies. This property is important, for both risk-neutral and risk-sensitive setups, in the context of theoretical algorithms based on the vanishing-discount approach as well as many Machine Learning methods based on approximating discounting schemes, see \cite{CavHer2011,PorCavCru2023,BisBor2023,DewDunEshGalRoo2020} and references therein. In particular, we want to emphasize that, in the risk-neutral context, the Blackwell optimality is often used to ensure connection between discounting and averaging schemes and constitutes usage of many dedicated Reinforced Learning algorithms based on discounting approximations; see \cite{DewGal2022,GraPet2023} and references therein. Consequently, better understanding of Blackwell optimality in the risk-sensitive setup could help boost development of efficient risk-sensitive reinforced learning algorithms which is an area of on-going and intensive research, see \cite{FeiYanWan2021,BasMaSheXu2022,DinJinLav2023,NooBar2021} and references therein. 

For completeness, let us  shortly comment on {\em the results obtained in this paper} and provide more background on the Blackwell property. On the first hand, as far as the sensitivity w.r.t.\ $\gamma$ is concerned, there are some previous studies focusing mainly on small $\gamma$ parameter analysis. For example, in \cite{chavez2015continuity} the authors consider risk-sensitive MDP with denumerable state space, bounded cost function and average cost criterion. They show that under a general form of the simultaneous Doeblin condition the value function is continuous in $\gamma$, also at $\gamma=0$. The authors give an example that this  may not be true under weaker recurrence assumptions. In \cite{Ste2023}, a more general 
certainty equivalent control problem of discrete time Markov processes with average utility functional is studied. In this setting  continuity of the optimal value function with respect to the risk parameter is shown. That being said, we are unaware of a direct finite-specification oriented studies or summaries in this area; note that even in the denumerable case, the problems could get much more complex so one expects some results to hold only in the finite state-action setup.  Apart from recalling optimal value continuity results, and providing the link between risk-sensitive stochastic control and multiplicative Poisson equations, we prove that the real line of risk-sensitivity parameters can be divided into a countable number of closed intervals on which in the interior of each interval there is a unique  (up to a symmetry) optimal risk-sensitive control with the possible exception in a countable number of points. This result is obtained by identifying the maximal value of the problem with the largest eigenvalue of a matrix using the Perron-Frobenius Theorem; see Theorem~\ref{th:MPE.closed} for details. Moreover, this implies that if there is a unique optimal stationary policy for the risk-neutral setting, this policy is also optimal for risk-sensitivity parameters that are close to zero.

On the other hand, while there are numerous studies on the Blackwell optimality within the risk-neutral finite setup, no analogous results  for the risk-sensitive case, due to our best knowledge, are available. For completeness, let us provide a short summary on the  Blackwell optimality for the risk-neutral case. This concept was initially introduced in~\cite{Bla1962}: A policy $\pi$ is said to be Blackwell optimal, if there exists $\beta_0<1$ such that $\pi$ is $\beta$-discount optimal for any $\beta\in (\beta_0,1)$. The Blackwell optimal policies are average reward optimal and it is possible to show that in MDPs with finite state and action space a Blackwell optimal policy always exists. This is usually shown by using Laurent series expansions and Cramer's rule to prove that each entry of the inverse matrix $(I-\beta \bP^u)^{-1}$ is a rational function of $\beta$ where $\bP^u$ is the transition matrix under decision rule $u.$ This approach also implies that the interval $[0,1)$ for the discount factor can be divided into closed intervals where on each interval there is one decision rule defining the optimal stationary policy for these discount factors. For further information about the early stages of Blackwell optimality and extensions, see \cite{DekHor1988,hordijk2002blackwell,cavazos1999direct,lewis2019uniform}.  Unfortunately, one cannot easily apply the aforementioned tools to the risk-sensitive case. This is linked to the fact that risk-sensitive discounted optimal policies are typically non-stationary (in contrast to risk-neutral discounted policies) and the problem becomes non-linear, so one cannot rely directly on the Laurent series expansion based techniques. That saying, in this paper we were able to show that  a form of the Blackwell property, adjusted to the non-stationary setup, can be recovered; the proofs are based on a combination of vanishing discount and span-contraction arguments. Indeed, suppose that for a fixed risk-sensitivity parameter $\gamma\neq 0$ the optimal $\beta$-discounted non-stationary Markov policy is given by $(u_0^\beta, u_1^\beta, u_2^\beta,\ldots)$. Then, for each $n\in\bN$, there is a $\beta_n<1$ such that for any $\beta\in(\beta_n,1)$ the stationary Markov policy $(u_n^\beta,u_n^\beta,\ldots )$ is optimal for the risk-sensitive average problem. For details, we refer to Theorem~\ref{th:vanishing.discount} and Theorem~\ref{th:blackwell.sensitive}, where the key results linked to vanishing discount approximation and risk-sensitive Blackwell property are stated; these theorems could be seen as the key contribution of this paper.

Our paper is organized as follows: In Section~\ref{S:introduction} we introduce the model, state our assumptions, discuss the solution technique and summarize some properties of the entropic utility which are helpful in our case. Then, in Section \ref{sec:riskaversion_parameter}, we investigate the dependence of the optimal averaged stationary policies on the risk-sensitivity parameter. In Section \ref{sec:blackwell} we focus on the vanishing discount framework and discuss Blackwell optimality in the risk-sensitive setting. Finally, we illustrate our findings with some examples given in Section \ref{S:examples}.

\section{Preliminaries}\label{S:introduction}
Let us consider a time-homogeneous Markov Decision Process (MDP) in finite settings. Namely, we fix $k,l\in\bN$ and consider the finite state space $E=\{x_1,\ldots,x_k\}$ and the finite action space $U=\{\tilde u_1,\ldots,\tilde u_l\}$. For $a\in U$, we use $P_{a}:=[p_a(x_i,x_j)]_{i,j=1}^{k}$ to denote the transition probability matrix under action $a$. For simplicity, we assume that for any $x\in E$ the action set is full, i.e., every action is always attainable; this assumption could be easily relaxed if required, see \cite{BauRie2011}.

As usual, we define the canonical space $(\Omega,\cF)=(E^{\bN},(2^E)^{\bN})$ with filtration $(\cF_n)$, where $\cF_n=(2^E)^n\times\{\emptyset,E\}^\bN$. For any policy $\pi\in \Pi$, where $\Pi$ is the set of all (attainable) policies $\pi=(a_n)_{n\in\bN}$ such that $a_n\colon\Omega\to U$ is $\cF_n$-measurable, we use $\bP^{\pi}$ to denote the corresponding controlled probability on $(\Omega,\cF)$; note that $\bP^{\pi}$ could be built directly from the transition matrix family $(P_{a})_{a\in U}$, see~\cite{BauRie2011}. For any $x\in E$ and $\pi\in U$, we also use $\bP_x^{\pi}$ to denote the controlled probability for the ($\pi$-controlled) canonical process $(X_n)$ with $X_0=x$ and $\bE_x^{\pi}$ to denote the corresponding expectation operator. Next, we associate (time-homogeneous) stationary Markov policies with functions $u\colon E\to U$. Namely, we link $u$ to the policy $\pi=(a_n)_{n\in\bN}\in \Pi$, where $a_n=u(X_n)$, and write $\bP^u$ rather than $\bP^{\pi}$. We denote the set of all such functions by $\Pi'$ and sometimes, with slight abuse of notation, write $u\in \Pi$ rather than $u\in\Pi'$; note that $|\Pi'|=l^k<\infty$. Similarly, for non-stationary Markov policies, we use the notation $(u_n)_{n\in\bN}$, where $u_n\colon E\to U$, link those policies to $\pi=(a_n)_{n\in\bN}\in \Pi$, where $a_n=u_n(X_n)$, and use $\Pi''$ to denote the set of all such policies. Finally, for consistency, given the (single step) action $a\in U$ and states $x,y\in E$, we often write $\bP^{a}(x,y)$ rather than $p_a(x,y)$, and use $\bP^{(\pi,n)}$ to denote the $n$th iteration of the controlled transition kernel linked to policy $\pi\in\Pi$.

In this paper, we consider a long-run MDP problem with a running reward function $c\colon E\times U\to\bR$. The objective function, for a pre-specified risk-averse parameter $\gamma\in\bR$, control strategy $\pi=(a_n)_{n\in\bN}\in\Pi$, and starting point $x\in E$ is equal to the {\it long run average risk-sensitive criterion} given by
\begin{equation}\label{eq:RSC}
J_\gamma(x,\pi):=
\begin{cases}
\liminf_{n\to\infty}\tfrac{1}{n}\frac{1}{\gamma }\ln\bE_x^{\pi}\left[e^{\gamma\sum_{i=0}^{n-1}c(X_i,a_i)}\right],& \textrm{if } \gamma\neq 0,\\
\liminf_{n\to\infty}\frac{1}{n}\bE_x^{\pi}\left[\sum_{i=0}^{n-1}c(X_i,a_i)\right],& \textrm{if }\gamma=0.
\end{cases}
\end{equation}
The value of $\gamma\in\bR$ determines our risk preferences. For $\gamma=0$, we are in the additive risk-neutral regime, while for 
$\gamma\neq 0$ we are in the multiplicative risk-sensitive regime. Furthermore, $\gamma<0$ corresponds to the risk-averse regime, while $\gamma>0$ denotes the risk-seeking regime, see~\cite{Whi1990}. Our goal is to maximise \eqref{eq:RSC}, i.e. find a strategy $\pi^*$ such that $J_{\gamma}(x,\pi^*)=J^*(x)$, for $x\in E$, where
\begin{equation}\label{eq:RSC.opt}
J^*_\gamma(x) :=\sup_{\pi\in \Pi}J_{\gamma}(x,\pi)
\end{equation}
is the maximal objective function value. Effectively, in this paper we consider the mixing framework under which the optimal value \eqref{eq:RSC.opt} is independent of the starting point, see \cite{BraCavFer1998} for more details.

\subsection{Ergodic assumptions}\label{S:ergodic}
In order to efficiently maximize \eqref{eq:RSC}, we need to impose specific assumptions. For simplicity, most of the results in this paper are formulated under ergodic mixing assumptions. That is, we assume the following:

\medskip

\begin{enumerate}
\item[(\namedlabel{A.1}{A.1})] (One-step mixing.) We have that
\[
\Delta:=\sup_{x,x'\in E}\sup_{A\in 2^E}\sup_{a,a'\in U} \left|\bP^a(x,A)-\bP^{a'}(x',A)\right| <1.
\]

\item[(\namedlabel{A.2}{A.2})] (Multi-step transition equivalence.) There exists $N\in\bN$ such that
\[
K:=\sup_{x,x'\in E}\sup_{y\in E}\sup_{\pi\in\Pi''} \frac{\bP^{(\pi,N)}(x,y)}{\bP^{(\pi,N)}(x',y)} <\infty.
\]
\end{enumerate}

\medskip

In \eqref{A.2}, we use convention $\tfrac{0}{0}:=1$ and $\tfrac{1}{0}:=\infty$. We decided to assume both \eqref{A.1} and \eqref{A.2} to simplify the narrative and present the results in a condensed (one-step) form. While in the finite state and action setting, the multi-step transition equivalence implies multi-step mixing for a fixed policy, \eqref{A.1} ensures one-step contraction property of the risk-sensitive averaged Bellman operator, see Proposition~\ref{eq:Bellman.op}. We also note that \eqref{A.1} could be linked to the {\it uni-chain} condition, i.e. the existence of a unique (controlled) recurrent class, which is a sufficient assumption for the risk-neutral control problems. On the other hand, for risk-sensitive problems, it might be insufficient for specific reward functions, see~\cite{CavHer2009}. That being said, for any specific function $c$, we know that \eqref{eq:RSC.opt} can be solved when $\gamma$ is sufficiently close to 0, see~\cite{PitSte2024}. Furthermore, in all results presented in this paper, \eqref{A.2} could be replaced with a stronger yet more intuitive assumption linked to state communication:

\medskip

\begin{enumerate}
\item[(\namedlabel{A.2'}{A.2'})] (Communicating states.) For each Markov policy $u\in\Pi''$ and $n\in\bN$, the $n$th iteration of the controlled transition kernel $\bP^{(\pi,n)}$ is irreducible, that is, for any $x\in E$ and $y\in E$ there exists an integer $m(x,y)\equiv m$ such that
\[
(\bP^{(\pi,n)})^m(x,y)>0.   
\]
\end{enumerate}

In an algebraic context, \eqref{A.2'} could be restated using the notion of {\it strong primitivity} introduced in \cite{CohSel1982}; this assumption ensures mixing on the whole space which indicates that the (unique) recurrent class is equal to $E$. Also, the number of steps $m$ in \eqref{A.2'}, could be effectively bounded from above by $2^k-2$ as will be shown in the proof of Proposition~\ref{pr:strong.mixing}; this bound is sharp, as proved in \cite{CohSel1982}. We refer to \cite{WuZhu2023} and references therein for other properties of matrix families and their products (that could be linked to non-homogeneous Markov chains). See also \cite{BauRie2011} for more information about communicating states assumption in the stationary MDP setup.  Now, let us show that \eqref{A.2'} is indeed stronger than \eqref{A.2} in the finite setup.

\begin{proposition}\label{pr:strong.mixing} Assume \eqref{A.2'}. Then \eqref{A.2} holds.
\end{proposition}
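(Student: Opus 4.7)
The goal is to deduce \eqref{A.2} from \eqref{A.2'} via uniform strong primitivity: I will show there exists $N\leq 2^k-2$ such that $\bP^{(\pi,N)}(x,y)>0$ for every $\pi\in\Pi''$ and every $x,y\in E$. Since any strictly positive entry of $\bP^{(\pi,N)}$ is at least $\varepsilon^N$, where $\varepsilon:=\min\{P_a(x,y):a\in U,\,x,y\in E,\,P_a(x,y)>0\}>0$, while every entry is at most $1$, this immediately yields $K\leq \varepsilon^{-N}<\infty$, i.e., \eqref{A.2}.

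\emph{Primitivity upgrade.} Assumption \eqref{A.2'} asserts that every $n$-step kernel $\bP^{(\pi,n)}$ is an irreducible matrix. I first upgrade this to primitivity. If some $A:=\bP^{(\pi,n)}$ had period $d\geq 2$, then $A^d$ would decompose into $d$ irreducible components and hence fail to be irreducible. But letting $\pi'$ be the $n$-periodic extension of $(u_0,\ldots,u_{n-1})$, one has $\bP^{(\pi',nd)}=A^d$, which by \eqref{A.2'} applied to $(\pi',nd)$ must be irreducible -- a contradiction. Hence every window $\bP^{(\pi,n)}$ is primitive.

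\emph{Uniform combinatorial bound via subset dynamics.} For fixed $x\in E$ and $\pi=(u_0,u_1,\ldots)\in\Pi''$, I track the forward reachable sets $R_n(x):=\{y\in E:\bP^{(\pi,n)}(x,y)>0\}$, which evolve under the set-valued maps $\phi_u(S):=\{y:\exists z\in S,\ P_u(z,y)>0\}$. Since there are only $2^k-2$ proper non-empty subsets of $E$, a pigeonhole argument on the chain $R_1(x),R_2(x),\ldots$, combined with primitivity of every intermediate window (which forbids stagnation at an invariant proper subset), forces $R_n(x)=E$ within $n\leq 2^k-2$ steps, uniformly in $\pi$ and $x$. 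This is the specialization of the strong-primitivity bound of Cohn--Sellers \cite{CohSel1982} to the MDP setting, and it completes the proof via the reduction above.

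The main obstacle is the subset-dynamics argument just sketched: classical Wielandt-type bounds apply to a single primitive matrix, whereas here the transition matrix changes at each step and we require a bound uniform over all non-stationary policies. The resolution is to run pigeonhole on the Boolean lattice of subsets of $E$ rather than on individual states, exploiting primitivity of every intermediate product (from the upgrade above) to rule out prolonged confinement of the reachable-set chain in a proper invariant subset.
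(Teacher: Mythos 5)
Your proposal is correct and follows essentially the same route as the paper: track the reachable sets $R_n(x)$ through the Boolean lattice of proper non-empty subsets of $E$, use irreducibility of every intermediate window (guaranteed by \eqref{A.2'}) to forbid a repeated proper subset, and conclude $R_N(x)=E$ for $N=2^k-2$ by pigeonhole, after which finiteness of $K$ is immediate. Two minor remarks: the primitivity upgrade is superfluous, since irreducibility of the intermediate products already makes a repeat $R_{n_1}(x)=R_{n_2}(x)\neq E$ impossible (such a repeat would exhibit a proper closed set for that window), and your explicit bound $K\leq\varepsilon^{-N}$ is a clean alternative to the paper's finite-infimum argument --- though you should state explicitly that once $R_n(x)=E$ it stays equal to $E$ (no one-step matrix has a zero column, by irreducibility), so that positivity holds at the common time $N$.
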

\begin{proof}
Assume \eqref{A.2'}. First, let us show that for any $\pi\in\Pi''$, the corresponding transition matrix $\bP^{(\pi,N)}$ is positive for $N:=2^{k}-2$, i.e. for $x,y\in E$ we have
\begin{equation}\label{eq:strong1}
\bP^{(\pi,N)}(x,y)>0.
\end{equation}
Fix $\pi=(u_1,u_2,\ldots)\in\Pi''$, $x\in E$, and set $S_n:=\{y\in E: \bP^{(\pi,n)}(x,y)>0\}$, $n\in\bN$, with $S_0=\{x\}$. Note that if $S_n=E$, then $S_{n+1}=E$, for $n\in\bN$, as $1$-step transition matrices induced by stationary Markov policies are irreducible due to \eqref{A.2'} and therefore do not contain columns consisting of all zero entries. Thus, assuming \eqref{eq:strong1} does not hold, the set $S_n$ must be a proper subset of $E$, for any $n=1,\ldots,N$. Furthermore, from \eqref{A.2'} we get that, for any $0\leq n_1<n_2\leq N$, the $(n_2-n_1)$-step transition matrix $\bP^{(u_{n_1+1},u_{n_1+2},\ldots,u_{n_2})}$ is irreducible. Thus, there must be no two equal elements in the family of sets $\{S_0,S_1,\ldots,S_N\}$; the existence of such elements, say $S_{n_1}$ and $S_{n_2}$, implies reducibility of  $\bP^{(u_{n_1+1},u_{n_1+2},\ldots,u_{n_2})}$ as $S_{n_1}=S_{n_2}$ and $S_{n_1}\neq E$. Now, since $2^k-2$ is the number of proper non-empty subsets of $E$ and $N+1>2^k-2$, we get $S_N=E$. Since $N$ does not depend on the choice of $x\in E$, this proves \eqref{eq:strong1}.

Second, note that $\bP^{(\pi,N)}(x,y)$ depends only on the choice of $x,y\in E$ as well as the first $N$ coordinates of the underlying Markov strategy $\pi\in \Pi''$. Consequently, since $|E|=k<\infty$ and $|\Pi'|^N\leq (l^k)^N<\infty$, we get that $\inf_{x,y\in E}\inf_{\pi\in\Pi''}\bP^{(\pi,N)}(x,y)>0$, which directly implies \eqref{A.2}.
\end{proof}
Finally, we note that in the finite setting, assumption \eqref{A.2} is effectively ensuring one-step recurrence or (global) transience in the finite number of steps.

\begin{corollary}
Assumption \eqref{A.2} holds if and only if there exists $N\in \bN$ such that for any $\pi\in\Pi''$ and $y\in E$, we have the following implication
\begin{equation}\label{eq:dust.to.dust}
\left(\exists_{x\in E} \bP^{(\pi,N)}(x,y)=0\right) \quad\Rightarrow \quad  \left(\forall_{x\in E} \bP^{(\pi,N)}(x,y)=0\right).
\end{equation}
\end{corollary}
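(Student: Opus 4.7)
The plan is to prove both implications directly from the definitions, exploiting the convention $1/0 = \infty$ for the forward direction and the finiteness of the state and action spaces for the reverse direction. The statement essentially says that \eqref{A.2} amounts to the absence of any policy and any target $y$ for which the $N$-step transition probability vanishes at some starting state but not at another.

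For the forward direction, I would assume \eqref{A.2} holds with a finite constant $K$, fix $N$ as in \eqref{A.2}, and consider any $\pi\in\Pi''$ and $y\in E$ for which there exists $x\in E$ with $\bP^{(\pi,N)}(x,y)=0$. For any other state $x'\in E$, the ratio $\bP^{(\pi,N)}(x',y)/\bP^{(\pi,N)}(x,y)$ appears in the supremum defining $K$; if $\bP^{(\pi,N)}(x',y)>0$ this ratio equals $\infty$ by convention, contradicting $K<\infty$. Hence $\bP^{(\pi,N)}(x',y)=0$ for every $x'$, which is exactly \eqref{eq:dust.to.dust}.

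For the reverse direction, I would take $N$ as furnished by the implication and fix arbitrary $\pi\in\Pi''$ and $y\in E$. By the dichotomy \eqref{eq:dust.to.dust}, either $\bP^{(\pi,N)}(\cdot,y)\equiv 0$ on $E$ (in which case all ratios are $0/0=1$), or $\bP^{(\pi,N)}(x,y)>0$ for every $x$. In the latter case I need a uniform bound on the ratio; this follows because $\bP^{(\pi,N)}(x,y)$ depends only on $x$, $y$, and the first $N$ Markov decision rules of $\pi$, and there are at most $k$ choices for each of $x,y$ and at most $l^k$ choices for each of the $N$ initial decision rules, giving only finitely many distinct values of $\bP^{(\pi,N)}(x,y)$ over all admissible data. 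Consequently the positive values admit a strict positive minimum $m>0$, so every ratio in the dichotomous case is bounded above by $1/m$, and taking the supremum over the (finite) collection of ratios yields a finite $K$, establishing \eqref{A.2}.

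The proof is essentially bookkeeping around the convention $\tfrac{0}{0}:=1$, $\tfrac{1}{0}:=\infty$; the only mild subtlety is making sure in the reverse direction that one genuinely reduces the supremum to a finite collection of numbers, but this is immediate from finiteness of $E$, $U$, and the dependence of $\bP^{(\pi,N)}$ on only the first $N$ Markov rules, as already exploited in the proof of Proposition~\ref{pr:strong.mixing}.
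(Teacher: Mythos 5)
Your proposal is correct and follows essentially the same route as the paper: the forward direction uses the convention $\tfrac{1}{0}=\infty$ to force the whole column to vanish, and the reverse direction extracts a positive lower bound on the strictly positive $N$-step probabilities from the finiteness of $E$, $U$, and the dependence of $\bP^{(\pi,N)}$ on only the first $N$ decision rules. No substantive differences to report.
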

\begin{proof}
($\Rightarrow$) Assume \eqref{A.2} with $N\in\bN$. Fix any $\pi\in \Pi''$ and $y\in E$ such that there exist $x'\in E$ for which $\bP^{(\pi,N)}(x',y)=0$. Then, we must have $\bP^{(\pi,N)}(x,y)=0$ for any $x\in E$ as otherwise $K\not<\infty$ due to convention $\tfrac{1}{0}=\infty$. ($\Leftarrow$) Assume there exists $N\in\bN$ for which \eqref{eq:dust.to.dust} is satisfied. Let $D(\pi):=\{(x,y)\in E\times E\colon \bP^{(\pi,N)}(x,y)>0\}$ for any $\pi\in\Pi''$, and fix $\epsilon:=\inf_{\pi\in\Pi''}\inf_{(x,y)\in D}\bP^{(\pi,N)}(x,y)$. Using similar logic as in the last part of Proposition~\ref{eq:strong1} proof, we get that $\epsilon>0$, as $\epsilon$ is an infimum over a finite set of positive numbers. From \eqref{eq:dust.to.dust}, we know that $K\leq \max(\tfrac {1}{\epsilon}, \tfrac{0}{0})<\infty$, which concludes the proof.
\end{proof}

\subsection{Bellman equation}
For a fixed $\gamma\in\bR$ we call a pair $(w(\cdot,\gamma),\lambda(\gamma))$, where $w(\cdot,\gamma): E\to \bR$ and $\lambda(\gamma)\in\bR$, a solution to the long-run risk sensitive averaged Bellman equation with risk aversion $\gamma$, if the pair solves the Bellman equation given by
\begin{equation}\label{eq:Bellman1}
\begin{cases}
\textstyle w(x,\gamma)=\max_{a\in U}\left[c(x,a)-\lambda(\gamma)+\frac{1}{\gamma}\ln\sum_{y\in E}e^{\gamma w(y,\gamma)}\bP^a(x,y)\right],& \textrm{if } \gamma\neq 0,\\
 w(x,\gamma)=\max_{a\in U}\left[c(x,a)-\lambda(\gamma)+\sum_{y\in E} w(y,\gamma)\bP^a(x,y)\right],& \textrm{if }\gamma=0.
\end{cases}
\end{equation}
Under the assumptions stated in Section~\ref{S:ergodic}, a solution to the Bellman equation always exists and could be used to recover the optimal Markov strategies for the initial problem \eqref{eq:RSC}.  This result could be recovered, e.g., by using the span-contraction approach that is using the contractive property of the Bellman operator
\begin{equation}\label{eq:Bellman.op}
T_{\gamma}g(x):=
\begin{cases}
\max_{a\in U}\left[c(x,a)+\frac{1}{\gamma}\ln\sum_{y\in E}e^{\gamma g(y)}\bP^a(x,y) \right],& \gamma\neq 0,\\
\max_{a\in U}\left[c(x,a)+\sum_{y\in E}g(y)\bP^a(x,y) \right],& \gamma=0,
\end{cases}
\end{equation}
for $g\colon E\to \bR$, under the span norm, that it, a semi-norm defined as
\[
\textstyle \|g\|_{\textnormal{sp}}:=\tfrac{1}{2}[\sup_{x\in E}g(x)-\inf_{x\in E}g(x)],\quad \textrm{for } g\colon E\to \bR.
\]

\begin{theorem}\label{th:local.contraction}
Assume \eqref{A.1}. Then, for any $\gamma\in\bR$, the Bellman operator defined in \eqref{eq:Bellman.op} is a local contraction in the span norm, i.e. there exists an (increasing) function $\Lambda\colon \bR_{+}\to (0,1)$ such that for any $g_1,g_2\colon E\to \bR$ we have
\[
\|T_{\gamma}g_1 -T_{\gamma}g_1\|_{\textnormal{sp}}=\Lambda(M)\|g_1-g_2\|_{\textnormal{sp}},
\]
where $M:=\|g_1\|_{\textnormal{sp}} \vee\|g_1\|_{\textnormal{sp}}$ is the local span-norm induced constant. Furthermore, assuming additionally \eqref{A.2}, we have $\sup_{n\in\bN}\|T^n_{\gamma}(0)\|_{\textnormal{sp}}<\infty$, i.e. the consequent iterations of the operator (starting from 0) are jointly bounded in the span-norm.
\end{theorem}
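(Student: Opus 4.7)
The plan is to combine the standard envelope argument with a first-order expansion of the log-sum-exp, and then to estimate the total variation between the resulting tilted measures via a Doeblin-type minorisation supplied by~\eqref{A.1}. Writing $T_\gamma g = \max_a[c(\cdot,a) + S^a_\gamma g]$ with $S^a_\gamma g(x) := \tfrac{1}{\gamma}\ln\sum_y e^{\gamma g(y)}\bP^a(x,y)$, I would first pick $x^+, x^-\in E$ where $\psi := T_\gamma g_1 - T_\gamma g_2$ attains its maximum and minimum, together with $a^+$ optimal for $T_\gamma g_1(x^+)$ and $a^-$ optimal for $T_\gamma g_2(x^-)$. The usual envelope inequalities then yield
\[
2\|T_\gamma g_1 - T_\gamma g_2\|_{\textnormal{sp}} \;=\; \psi(x^+) - \psi(x^-) \;\leq\; [S^{a^+}_\gamma g_1(x^+) - S^{a^+}_\gamma g_2(x^+)] - [S^{a^-}_\gamma g_1(x^-) - S^{a^-}_\gamma g_2(x^-)].
\]

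Next, I would use the mean-value formula along $g_t := (1-t)g_2 + t g_1$,
\[
S^a_\gamma g_1(x) - S^a_\gamma g_2(x) = \int_0^1 \sum_{y\in E}(g_1-g_2)(y)\,\mu^{x,a}_t(y)\,dt,\quad \mu^{x,a}_t(y) := \frac{e^{\gamma g_t(y)}\bP^a(x,y)}{\sum_{z\in E}e^{\gamma g_t(z)}\bP^a(x,z)},
\]
and exploit $\sum_y(\mu^{x^+,a^+}_t - \mu^{x^-,a^-}_t) = 0$ to recentre $g_1-g_2$ so that its sup-norm equals $\|g_1-g_2\|_{\textnormal{sp}}$, obtaining
\[
\|T_\gamma g_1 - T_\gamma g_2\|_{\textnormal{sp}} \;\leq\; \|g_1-g_2\|_{\textnormal{sp}}\cdot \sup_{t,(x,a),(x',a')} d_{\textnormal{TV}}(\mu^{x,a}_t,\mu^{x',a'}_t).
\]
The main obstacle is then the uniform estimate $d_{\textnormal{TV}}(\mu^{x,a}_t,\mu^{x',a'}_t) \leq 1 - (1-\Delta)e^{-2|\gamma|M}$. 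To prove it I would use \eqref{A.1} to select a probability $\nu$ on $E$ with $\bP^a(x,\cdot)\wedge \bP^{a'}(x',\cdot) \geq (1-\Delta)\nu$, bound the normalising constants by $\max_z e^{\gamma g_t(z)}$ from above, and use $\|g_t\|_{\textnormal{sp}}\leq M$ (convexity of the span seminorm) to get $e^{\gamma g_t(y)}/\max_z e^{\gamma g_t(z)} \geq e^{-2|\gamma|M}$ for every $y$. Combining these yields $\mu^{x,a}_t(y)\wedge \mu^{x',a'}_t(y) \geq (1-\Delta)e^{-2|\gamma|M}\nu(y)$; summation gives the claim. Setting $\Lambda(M) := 1 - (1-\Delta)e^{-2|\gamma|M}$ produces the required increasing function with values in $(0,1)$; the $\gamma=0$ branch reduces to the classical linear span contraction with constant $\Delta$ and does not need the exponential correction.

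For the joint boundedness of iterates I would invoke~\eqref{A.2} via the $N$-step representation
\[
T^N_\gamma g(x) = \sup_{\pi\in\Pi''} \tfrac{1}{\gamma}\ln \bE^\pi_x\Big[\exp\Big(\gamma\sum_{i=0}^{N-1}c(X_i,a_i) + \gamma g(X_N)\Big)\Big],
\]
with supremum over $N$-step non-stationary Markov policies. Bounding the reward exponential by $e^{\pm N|\gamma|\|c\|_\infty}$ and applying the ratio bound of~\eqref{A.2} to $\bP^{(\pi,N)}$ yields $\bE^\pi_x[\cdot]\leq e^{2N|\gamma|\|c\|_\infty}K\cdot \bE^\pi_{x'}[\cdot]$ uniformly in $\pi$ and in $g$, so $\|T^N_\gamma g\|_{\textnormal{sp}}\leq N\|c\|_\infty + (\ln K)/(2|\gamma|)$ independently of $g$. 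Iterating $T^N_\gamma$ keeps the span bounded by this constant, while $T^n_\gamma 0$ for $n<N$ is trivially finite on the finite state-action space, giving $\sup_n\|T^n_\gamma 0\|_{\textnormal{sp}}<\infty$; the $\gamma=0$ case follows analogously from the uniform rate $\Delta$.
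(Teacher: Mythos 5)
Your argument is correct, and there is nothing in the paper to compare it against line by line: the authors explicitly omit the proof of Theorem~\ref{th:local.contraction} and defer to \cite{DiMSte1999}. Your route is the standard one that reference employs and that the statement is designed for: the envelope inequality at the argmax/argmin of $T_\gamma g_1-T_\gamma g_2$, the mean-value representation of the log-sum-exp difference as an integral against tilted kernels $\mu^{x,a}_t$, and the Doeblin-type minorisation $\mu^{x,a}_t\wedge\mu^{x',a'}_t\geq e^{-2|\gamma|M}\,\bigl(\bP^a(x,\cdot)\wedge\bP^{a'}(x',\cdot)\bigr)$, which under \eqref{A.1} gives $\Lambda(M)=1-(1-\Delta)e^{-2|\gamma|M}$ (read the ``$=$'' in the statement as ``$\leq$''; it is a typo, as is the repeated $g_1$). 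Your treatment of the second claim via the $N$-step multiplicative dynamic-programming representation and the ratio bound from \eqref{A.2} is exactly the device the paper itself uses in the proof of Proposition~\ref{pr:w.disc.bound} (cf.\ \eqref{eq:bellman.n.times}--\eqref{eq:disc.bound.2}), so the whole proposal is consistent with the paper's toolkit; the only cosmetic caveat is that $\Lambda(0)=\Delta$ may be $0$ in the degenerate case $\Delta=0$, which does not affect anything.
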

For brevity, we omit the proof of Theorem~\ref{th:local.contraction} and details of the consequent constructions, see e.g. \cite{DiMSte1999}. Note that, by the joint boundedness of the iterated sequence $(T^n_{\gamma}(0))_{n\in\bN}$, we get the global contraction property on the appropriate subspace, so that we can use Banach's fixed-point theorem to recover the unique fixed point (defined up to an additive constants) of the Bellman operator~\eqref{eq:Bellman.op}. This proves the existence of the solution of~\eqref{eq:Bellman1} as stated in the next result.

\begin{theorem}\label{th:exist}
Assume \eqref{A.1} and \eqref{A.2}. Then, for any $\gamma\in\bR$, there exists a unique solution $(w(\cdot,\gamma),\lambda(\gamma))$ to the Bellman equation~\eqref{eq:Bellman1}, where $w(\cdot,\gamma)$ is defined up to an additive constant.  Moreover, the stationary Markov policy $\hat u\in\Pi'$ given by
\begin{equation}\label{eq:Markov.stat.strategy}
\hat u(x)=
\begin{cases}
\argmax_{a\in U}\left[c(x,a)-\lambda(\gamma) + \frac{1}{\gamma}\ln \left( \sum_{y\in E} e^{w(y,\gamma)}\mathbb{P}^a(x,y)\right)\right],& \gamma\neq 0,\\
\argmax_{a\in U}\left[c(x,a)-\lambda(\gamma) + \sum_{y\in E} w(y,\gamma)\mathbb{P}^a(x,y)\right],& \gamma=0,
\end{cases}
\end{equation}
is optimal for \eqref{eq:RSC.opt}, that is, $J_{\gamma}(x,\hat u)=J^*_\gamma(x)=\lambda(\gamma)$, for $x\in E$.
\end{theorem}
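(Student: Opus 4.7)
The plan is to bootstrap Theorem~\ref{th:local.contraction} into a span-norm fixed-point theorem for $T_\gamma$, read off the Bellman pair $(w(\cdot,\gamma),\lambda(\gamma))$ from the fixed point, and then run a standard verification argument (using the multiplicative form of the iterated Bellman equation) to identify $\lambda(\gamma)$ with the optimal value and $\hat u$ with an optimiser.

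For the existence part, I would first pass to the quotient space $\bR^E/\bR\mathbf{1}$, on which $\|\cdot\|_{\textnormal{sp}}$ is a genuine norm and $T_\gamma$ is well-defined because $T_\gamma(g+c)=T_\gamma g+c$ for every constant $c$. By the joint boundedness $M_0:=\sup_{n}\|T_\gamma^n(0)\|_{\textnormal{sp}}<\infty$ from Theorem~\ref{th:local.contraction}, the closed span-ball $B_{M_0}$ in the quotient is $T_\gamma$-invariant, and on $B_{M_0}$ the local contraction constant $\Lambda(M_0)\in(0,1)$ becomes a uniform contraction constant. Banach's fixed point theorem then produces a unique $[w(\cdot,\gamma)]\in B_{M_0}$ with $T_\gamma w(\cdot,\gamma)=w(\cdot,\gamma)+\lambda(\gamma)$ for a uniquely determined constant $\lambda(\gamma)\in\bR$, which is exactly \eqref{eq:Bellman1}. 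Uniqueness up to an additive constant in $w(\cdot,\gamma)$ follows from uniqueness of the fixed point in the quotient, and uniqueness of $\lambda(\gamma)$ is automatic once $w(\cdot,\gamma)$ is pinned down modulo constants.

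The verification step is the main calculation. Fix $\gamma\neq 0$ (the case $\gamma=0$ is analogous and classical). Rewriting the Bellman equation \eqref{eq:Bellman1} multiplicatively at the maximiser $\hat u$ of \eqref{eq:Markov.stat.strategy} gives
\begin{equation*}
e^{\gamma(w(x,\gamma)+\lambda(\gamma))} \;=\; e^{\gamma c(x,\hat u(x))}\sum_{y\in E}e^{\gamma w(y,\gamma)}\bP^{\hat u}(x,y),
\end{equation*}
and iterating this identity under $\bP_x^{\hat u}$ yields, for every $n\in\bN$,
\begin{equation*}
e^{\gamma(w(x,\gamma)+n\lambda(\gamma))}\;=\;\bE_x^{\hat u}\!\left[e^{\gamma\sum_{i=0}^{n-1}c(X_i,\hat u(X_i))}\,e^{\gamma w(X_n,\gamma)}\right].
\end{equation*}
Since $E$ is finite, $w(\cdot,\gamma)$ is bounded, so taking logarithms, dividing by $n\gamma$ and letting $n\to\infty$ produces $J_\gamma(x,\hat u)=\lambda(\gamma)$. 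For the matching upper bound on an arbitrary $\pi=(a_n)_{n\in\bN}\in\Pi$, the same identity but with $=$ replaced by $\geq$ (from the fact that the max in \eqref{eq:Bellman1} dominates every choice of action) gives
\begin{equation*}
e^{\gamma(w(x,\gamma)+n\lambda(\gamma))}\;\geq\;\bE_x^{\pi}\!\left[e^{\gamma\sum_{i=0}^{n-1}c(X_i,a_i)}\,e^{\gamma w(X_n,\gamma)}\right],
\end{equation*}
whence $J_\gamma(x,\pi)\leq \lambda(\gamma)$ after the same asymptotic manipulation.

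The one subtlety I expect is the sign handling for $\gamma<0$: taking $\tfrac{1}{\gamma}\ln(\cdot)$ reverses inequalities, so one has to be careful to phrase the max as a min in that regime or, equivalently, to rewrite the multiplicative Bellman inequality with the sign of $\gamma$ built in. A second minor point is that strict invariance of $B_{M_0}$ under $T_\gamma$ in the quotient requires noting that the orbit $\{T_\gamma^n(0)\}$ stays in $B_{M_0}$ by definition of $M_0$, so the completion of any Cauchy sequence obtained by iterating $T_\gamma$ from $0$ lies in $B_{M_0}$, which is where local contraction gives a uniform rate. Beyond these bookkeeping issues the argument is routine.
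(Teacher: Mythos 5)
Your proposal follows exactly the route the paper itself indicates: the paper omits the proof of Theorem~\ref{th:exist}, stating that it is a direct consequence of the span-contraction property in Theorem~\ref{th:local.contraction} together with Banach's fixed point theorem on the quotient by constants, with the verification/identification step delegated to the standard iterative procedure in the cited references. Your execution of both halves (including the sign reversal for $\gamma<0$ and the boundedness of the orbit $\{T_\gamma^n(0)\}$ replacing ball invariance) is correct and matches the intended argument.
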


Since the proof of Theorem~\ref{th:exist} is a direct consequence of Theorem~\ref{th:local.contraction}, we omit its proof. We note that, under \eqref{A.1}, the existence of a solution to Bellman's equation~\eqref{eq:Bellman1} for $\gamma=0$ also follows directly from Proposition 1 in \cite{Ste2023}; assumption \eqref{A.2} is effectively not required in the risk-neutral case. For $\gamma\neq 0$, an alternative proof idea using the Krein-Rutman theorem can be found in \cite{Ste2024}; this technique, in contrast to the span-contraction approach, does not directly ensure solution's uniqueness (up to an additive constant). Also, the solution existence statement could be recovered by analysing the related eigenvalue problems and applying the Perron-Frobenius Theorem (which is in fact a special case of Krein-Rutman theorem); see proof of Theorem~\ref{sec:riskaversion_parameter} for details. Finally, note that the Markov strategy \eqref{eq:Markov.stat.strategy} is induced by Bellman's equation and constructed by following the standard iterative procedure, see e.g. \cite{PitSte2024}.

\subsection{Entropic representation and duality}
The objective criterion introduced in \eqref{eq:RSC} is directly related to the entropic utility. For a generic probability measure $\bP$ and related expectation $\bE$, the entropic utility is given by
\begin{equation}\label{eq:entropy}
\ent(Z,\gamma):=
\begin{cases}
\frac{1}{\gamma}\ln \bE\left[e^{\gamma Z} \right] &\textrm{if } \gamma\neq 0\\
\bE[X] &\textrm{if } \gamma =0
\end{cases}
, \quad Z\in L^{\infty}(\Omega,\cF,\bP).
\end{equation}
Indeed, we can rewrite \eqref{eq:RSC} as
\[
\textstyle J_\gamma(x,\pi)=\liminf_{n\to\infty}\tfrac{1}{n}\ent_x^{\pi}\left(\sum_{i=0}^{n-1}c(X_i,a_i),\gamma\right)
\]
where $\ent_x^{\pi}$ is the entropic utility operator for $\bP_x^{\pi}$. A Taylor series expansion around $\gamma=0$ shows that
\begin{equation}
 \ent(Z,\gamma) \approx   \bE[Z] +\tfrac12 \gamma \textnormal{Var}(Z)
\end{equation}
which yields the stated risk-sensitivity parameter interpretation and links risk-sensitive criterion to the classical mean-variance objective framework, see \cite{BiePli2003} for more details.
The mapping \eqref{eq:entropy} satisfies a number of (unique) properties, which makes it useful in long-run stochastic control. In particular, it admits a robust (dual) representation of the form
\begin{equation}\label{eq:entropy.dual}
\ent(Z,\gamma)=\inf_{\bQ\in \cP(\Omega)}\left\{\bE_{\bQ}[Z] -\tfrac{1}{\gamma}H[\bQ\,\|\, \bP] \right\},
\end{equation}
where $\cP(\Omega)$ denotes the set of all probability measures on $(\Omega,\cF)$ and $H[\bQ\,\| \, \bP]$ is the relative entropy (Kullback-Leibler divergence) of $\bQ$ w.r.t.\  $\bP$ given by
\begin{equation}\label{eq:relative.entropy}
H[\bQ\,\|\, \bP]:=
\begin{cases}
\bE_{\bQ}\left[\ln\frac{\dif\bQ}{\dif\bP}\right] & \textrm{if } \bQ\ll \bP,\\
+\infty &\textrm{otherwise}.
\end{cases}
\end{equation}
Furthermore, entropic utility is in fact a convex risk measure (for $\gamma<0$), certainty equivalent, as well as mean value principle. Throughout the paper, we often use the respective monotonicity, additivity, or time-consistency properties, we refer to \cite{KupSch2009} for details. 

\section{Stationary Markov policies and their risk-aversion stability}\label{sec:riskaversion_parameter}
This section mainly aims to explore the stability and continuity of stationary Markov policies and optimal value functions in relation to the risk aversion parameter $\gamma \in \mathbb{R}$. As expected, small perturbations in the risk aversion specification typically do not result in optimal policy change. For brevity, for any $\gamma\in\bR$, we define the set of (attainable) stationary Markov optimal strategies given by
\begin{equation}\label{eq:Pi*}
\Pi^{*}_{\gamma}:=\{u\in\Pi'\colon  J_{\gamma}(x,u)=J^*_{\gamma}(x),\, x\in E\},
\end{equation}
Also, for $u\in\Pi'$ and $\beta\in (0,1]$ we define the set
\begin{equation}\label{eq:Gamma}
\Gamma(u):=\{\gamma\in\bR\colon u\in \Pi^{*}_{\gamma}\},
\end{equation}
which identifies a set of risk aversion parameters for which the Markov policy $u\in\Pi'$ is optimal for any starting point $x\in E$. 

From Theorem~\ref{th:exist} we know that under \eqref{A.1} and \eqref{A.2} the set $\Pi^{*}_{\gamma}$ is nonempty for any $\gamma\in\bR$. To find the optimal policy, instead of solving the Bellman equation \eqref{eq:Bellman1} directly, we can consider a (finite) set of Poisson equations linked to fixed Markov policies and then maximize the target value. Namely, for $\gamma\in \bR$ and $u\in \Pi'$ we consider equation
\begin{equation}\label{eq:MPE}
\begin{cases}
\textstyle w^u(x,\gamma)=c(x,u(x))-\lambda^u(\gamma)+\frac{1}{\gamma}\ln\sum_{y\in E}e^{\gamma w^u(y,\gamma)}\bP^u(x,y),& \textrm{if } \gamma\neq 0,\\
 w^u(x,\gamma)=c(x,u(x))-\lambda^u(\gamma)+\sum_{y\in E} w^u(y,\gamma)\bP^u(x,y),& \textrm{if }\gamma=0,
\end{cases}
\end{equation}
and refer to it as the {\it Multiplicative Poisson Equation} (MPE), if $\gamma\neq 0$, and {\it Additive Poisson Equation} (APE), if $\gamma=0$. As before, we call a pair $(w^u(\cdot,\gamma),\lambda^u(\gamma))$, where $w^u(\cdot,\gamma)\colon E\to\bR$ and $\lambda^u(\gamma)\in \bR$, a solution to MPE/APE if the pair solves \eqref{eq:MPE}.
Let us now summarize the properties of MPE and APE solutions from \eqref{eq:MPE} and their link to the original problem.

\begin{theorem}\label{th:MPE.closed}
Assume \eqref{A.1} and \eqref{A.2}. Then,
\begin{enumerate}
\item For any $\gamma\in\bR$ and $u\in\Pi'$, there is a solution to MPE/APE equation~\eqref{eq:MPE};
\item For any $u\in \Pi'$, the function $\gamma\to \lambda^u(\gamma)$ is continuous, bounded, and non-decreasing.  Furthermore, $\gamma\to \lambda^u(\gamma)$ is real analytic.
\item For any $\gamma\in \bR$, we have $\max_{u\in\Pi'}\lambda^u(\gamma)=\lambda(\gamma)$, where $\lambda(\gamma)$ is from \eqref{eq:Bellman1}.
\item For any $\gamma\in\bR$ and $u\in \arg\max_{u\in\Pi'}\lambda^u(\gamma)$, we have $u\in\Pi_{\gamma}^*$.
\item  For any $u\in\Pi'$ the set $\Gamma(u)$ is closed and can be represented as a countable union of closed intervals.

\item For any $u\in \Pi'$ we have $\lambda^u(\gamma)\nearrow \lambda^u(\infty)$, for some $\lambda^u(\infty)\in\bR$, as $\gamma\to\infty$. Moreover, if $\lambda(\infty):=\max_{u\in \Pi'}\lambda^{u}(\infty)$ is realised for a unique $u^*\in\Pi'$, then there exists $\gamma_\infty$  such that $\Pi_\gamma^*=\{u^{*}\}$, for  $\gamma\ge \gamma_\infty$. A similar statement holds for negative $\gamma$.
\end{enumerate}
\end{theorem}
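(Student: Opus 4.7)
The plan is to exploit the fact that, for each fixed stationary policy $u\in\Pi'$, the MPE reduces to a Perron--Frobenius problem for the tilted nonnegative matrix $M^u_\gamma(x,y):=e^{\gamma c(x,u(x))}\bP^u(x,y)$ (with the convention that $\gamma=0$ recovers $\bP^u$ itself, handled via APE). Under \eqref{A.1} and \eqref{A.2'}/\eqref{A.2} the matrix $\bP^u$ is irreducible, hence so is $M^u_\gamma$ for every $\gamma$; the Perron--Frobenius theorem then produces a simple largest eigenvalue $\rho^u(\gamma)>0$ with a positive eigenvector $h^u(\cdot,\gamma)$. Setting $w^u(x,\gamma):=\tfrac{1}{\gamma}\ln h^u(x,\gamma)$ and $\lambda^u(\gamma):=\tfrac{1}{\gamma}\ln\rho^u(\gamma)$ solves \eqref{eq:MPE} for $\gamma\ne 0$; for $\gamma=0$ the standard Poisson equation solution (via the stationary distribution $\mu^u$) gives $\lambda^u(0)=\sum_{x\in E} c(x,u(x))\mu^u(x)$. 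This proves assertion (1). Alternatively, (1) follows by applying Theorem~\ref{th:local.contraction} to the single-policy Bellman operator in which the $\max_{a\in U}$ is dropped.

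\textbf{Continuity, monotonicity, analyticity of $\lambda^u$ (assertion (2)).} Boundedness is immediate from $\lambda^u(\gamma)\in[\min c,\max c]$, proved by applying $\ent(\cdot,\gamma)$ to the trivial bounds $\min c\le \tfrac{1}{n}\sum_{i=0}^{n-1} c(X_i,u(X_i))\le \max c$ and passing to the limit. Monotonicity in $\gamma$ is the classical monotonicity of the entropic utility $\gamma\mapsto \ent(Z,\gamma)$ applied pathwise, followed by taking $\liminf$. For real analyticity, note that the entries of $M^u_\gamma$ are real analytic in $\gamma$; since $\rho^u(\gamma)$ is an algebraically simple isolated eigenvalue (by Perron--Frobenius for irreducible nonnegative matrices), the analytic perturbation theory for matrices (Kato) gives that $\gamma\mapsto\rho^u(\gamma)$ is real analytic on $\bR$. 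Writing $\rho^u(\gamma)=1+a_1\gamma+a_2\gamma^2+\cdots$ around $\gamma=0$ (since $\rho^u(0)=1$), the identity $\lambda^u(\gamma)=\gamma^{-1}\ln\rho^u(\gamma)$ extends analytically to $\gamma=0$ with value $a_1=\sum_{x} c(x,u(x))\mu^u(x)$, and continuity at zero follows.

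\textbf{Optimality and the structure of $\Gamma(u)$ (assertions (3)--(5)).} For (3)--(4), let $u^\star\in\arg\max_{u\in\Pi'}\lambda^u(\gamma)$. Plugging $(w^{u^\star},\lambda^{u^\star})$ into the right-hand side of~\eqref{eq:Bellman1} and comparing with~\eqref{eq:MPE} for an arbitrary competitor $u$, the verification direction is immediate; for the reverse, a standard iteration of the entropic tower property along any control $\pi\in\Pi$ against the solution of~\eqref{eq:Bellman1} shows $J_\gamma(x,\pi)\le\lambda(\gamma)$, and equality is achieved by the stationary rule~\eqref{eq:Markov.stat.strategy} which, by construction, is a maximizer $u^\star$. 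Hence $\lambda(\gamma)=\max_u\lambda^u(\gamma)$ and any $u\in\arg\max$ lies in $\Pi^\star_\gamma$. For (5), observe $\Gamma(u)=\bigcap_{v\in\Pi'}\{\gamma\in\bR:\lambda^u(\gamma)\ge\lambda^v(\gamma)\}$. Each difference $\lambda^u-\lambda^v$ is real analytic on $\bR$, so either it is identically zero (in which case $\{\lambda^u\ge\lambda^v\}=\bR$) or its zero set is locally finite. In the second case $\{\lambda^u\ge\lambda^v\}$ is a disjoint union of closed intervals separated by the discrete zero set, hence a countable union of closed intervals. Intersecting over the finite family $\Pi'$ preserves this structure and closedness.

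\textbf{Limits at $\pm\infty$ (assertion (6)).} Monotonicity together with the uniform bound $|\lambda^u(\gamma)|\le\|c\|_\infty$ yields $\lambda^u(\infty):=\lim_{\gamma\to\infty}\lambda^u(\gamma)\in\bR$ for every $u\in\Pi'$. Suppose $u^\star$ is the unique maximizer of $\lambda^u(\infty)$ over $\Pi'$, with gap $\delta:=\lambda^{u^\star}(\infty)-\max_{u\ne u^\star}\lambda^u(\infty)>0$. Since $\Pi'$ is finite and every $\lambda^u$ converges monotonically, there exists $\gamma_\infty$ such that for $\gamma\ge\gamma_\infty$ we have $\lambda^{u^\star}(\gamma)\ge\lambda^{u^\star}(\infty)-\delta/3$ while $\lambda^u(\gamma)\le\lambda^u(\infty)\le\lambda^{u^\star}(\infty)-\delta$ for $u\ne u^\star$, and therefore $u^\star$ is the unique maximizer for all such $\gamma$, so $\Pi^\star_\gamma=\{u^\star\}$ by (4). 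The symmetric argument, using monotone decrease toward $\lambda^u(-\infty)$ as $\gamma\to-\infty$, yields the analogous statement for negative $\gamma$.

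\textbf{Main obstacle.} The most delicate point is assertion (2), specifically the real analyticity statement: one must verify that the Perron eigenvalue of the parametric family $M^u_\gamma$ is indeed isolated and simple uniformly on compact subsets of $\bR$ (so that Kato's analytic perturbation theory applies globally), and then argue the removable singularity at $\gamma=0$ from the expansion $\rho^u(\gamma)=1+a_1\gamma+O(\gamma^2)$. Once this is in hand, assertion (5) follows almost automatically from the discreteness of zero sets of non-trivial real analytic functions.
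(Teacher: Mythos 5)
Your proposal is correct and follows essentially the same route as the paper: existence via the Perron--Frobenius eigenvalue problem for the tilted matrix $e^{\gamma c(x,u(x))}\bP^u(x,y)$, monotonicity and boundedness via the entropic utility representation of $\lambda^u(\gamma)$, real analyticity via analytic perturbation of the simple Perron root (the paper cites Lax where you cite Kato, and additionally invokes Kingman's log-convexity for continuity, which your analyticity argument subsumes), the structure of $\Gamma(u)$ from isolated zeros of the real-analytic differences $\lambda^u-\lambda^{\tilde u}$, and the limit statement from monotone bounded convergence over the finite family $\Pi'$. The only cosmetic differences are your more explicit verification step for assertions (3)--(4) and the explicit computation of $a_1$ at $\gamma=0$; no gaps.
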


\begin{proof} For brevity, we focus on the case $\gamma\neq 0$ and MPE equations.

\medskip

\noindent (1.) The proof of the existence of the MPE solution follows directly from~\cite{CavHer2009}. This is due to the fact that \eqref{A.2} together with the aperiodicity induced by \eqref{A.1} implies the uni-chain condition, and existence of a (unique) recurrent class to which we enter in the finite number of steps; see also \cite{PitSte2024} where the existence of an MPE solution is discussed in the mixing framework. For completeness, we note that the existence of MPE solution in the finite setting is, in fact, a direct consequence of the Perron-Frobenius Theorem. Indeed, for $\gamma\neq 0$ we can convert \eqref{eq:MPE} to the eigenvalue problem $Av=rv$, where
\begin{equation}\label{eq:MPE.matrix.repr}
A :=\left[\bP^u(x_i,x_j)e^{\gamma c(x_i,u(x_i))}\right]_{i,j=1}^{k},\quad
v := [e^{\gamma w^{u}(x_i,\gamma)}]_{i=1}^{k},\quad
r := e^{\gamma\lambda^{u}(\gamma)}.
\end{equation}
In this setting, given matrix $A$, we are looking for a strictly positive eigenvalue $r$ and eigenvector $v$. Without loss of generality, let us assume that $A$ is irreducible; otherwise, one needs to restrict $A$ to states from the unique recurrent class. The Perron-Frobenius Theorem states that the largest eigenvalue of $A$ is positive and there exists an eigenvector with positive entries, see \cite{Mey2000}. This shows that the solution to MPE exists.

\medskip 

\noindent (2.) To show monotonicity of the function $\gamma\to\lambda^u(\gamma)$, we first note that for any $u\in\Pi'$, the value $\lambda^u(\gamma)$ is the value of the long-run average risk sensitive criterion for policy $u$, that is, we have
\begin{equation}\label{eq:MPE.eq1}
\lambda^{u}(\gamma)=\lim_{n\to\infty}\tfrac{1}{n}\tfrac{1}{\gamma }\ln\bE_x^{u}\left[e^{\gamma\sum_{i=0}^{n-1}c(X_i,u(X_i))}\right],\quad x\in E,
\end{equation}
see \cite{PitSte2024} for details.
Thus, using \eqref{eq:entropy}, we can rewrite \eqref{eq:MPE.eq1} as
\begin{equation}\label{eq:MPE.entropic}
\lambda^u(\gamma)=\lim_{n\to\infty}\frac{1}{n}\textrm{Ent}_{x}^u\left(\sum_{i=0}^{n-1}c(X_i,u(X_i)),\gamma\right).
\end{equation}
Since the entropic utility is monotone with respect to $\gamma$, the monotonicity of $\gamma\to\lambda^u(\gamma)$ follows directly. From \eqref{eq:MPE.entropic}, we also get the boundedness of the function $\gamma\to\lambda^u(\gamma)$ since the entropic utility can be bounded from below (resp. above) by the essential infimum (resp. supremum) of the underlying random variable, which implies
\[
-\|c\| \leq \lambda^u(\gamma)\leq \|c\|,\quad \gamma\in\bR,
\]
where $\|\cdot\|$ denotes the supremum norm, see \cite{KupSch2009} for details. Next, let us show the continuity of $\gamma\to\lambda^u(\gamma)$ and later show that this function is in fact also real analytic.

To show the continuity of $\gamma\to\gamma\lambda^u(\gamma)$, we first note that the function $\gamma\to\gamma\lambda^u(\gamma)$ is convex. This follows directly from \cite{Kin1961}, where it has been shown that if all the entries in the matrix $A$ in the representation \eqref{eq:MPE.matrix.repr} are a logconvex function of the underlying parameter $\gamma$, then the largest eigenvalue must also be logconvex, which implies convexity of $\gamma\to\gamma\lambda^u(\gamma)$. Consequently, the function $\gamma\to\gamma\lambda^u(\gamma)$ is continuous on $\bR$, which implies continuity of $\gamma\to\lambda^u(\gamma)$ on $\bR\setminus\{0\}$.  For brevity, we skip the continuity proof of $\gamma\to\lambda^u(\gamma)$ for $\gamma=0$; see Equation~(36) in \cite{Ste2023} or \cite{chavez2015continuity} for the proof idea.

Let us finally show the real analytic property of $\gamma\to\lambda^u(\gamma)$. From representation \eqref{eq:MPE.matrix.repr}, the Perron-Frobenius Theorem and Theorem 8 in Chapter 9 of \cite{Lax2007}, where it is shown that if the matrix $A$ has analytic entries (w.r.t.\ $\gamma$), then the simple eigenvalues are also analytic (w.r.t.\ $\gamma$), we get that $\gamma\to e^{\gamma\lambda^u(\gamma)}$ is analytic on the real line, and so is $\gamma\to \gamma\lambda^u(\gamma)$.  This implies that on $(-\varepsilon,\varepsilon)$ with $\varepsilon>0$ there is a Taylor series of $\gamma\lambda^u(\gamma)=\sum_{k=0}^\infty a_k \gamma^k$ around $\gamma=0$. Moreover, we find that $\gamma\to \lambda^u(\gamma)$ is analytic for $\gamma >0$ and for $\gamma <0$. Furthermore, since $\gamma\to\lambda^u(\gamma)$ is continuous on $\bR$, we obtain a Taylor series for $\lambda^u(\gamma)$ around zero (i.e. $a_0=0$). Considered together, this implies that $\gamma\to \lambda^u(\gamma)$ is real analytic on $\bR$.

\medskip

\noindent (3.--4.) For the next part of the proof, it is sufficient to recall that for any $u\in\Pi'$ the value $\lambda^u(\gamma)$ is recovering the long-run averaged risk sensitive criterion value for control $u$ as stated in \eqref{eq:MPE.eq1}. Consequently, the maximal value is attained and recovered by one of the Markov polices; such policy must exist due to Theorem~\ref{th:exist}.

\medskip

\noindent (5.) Fix $u\in\Pi'$. From (2.), we see that for any $\tilde u\in \Pi'$, the function $F_{u,\tilde u}\colon \bR\to\bR$ given by $F_{u,\tilde u}(\gamma):=\lambda^u(\gamma)-\lambda^{\tilde u}(\gamma)$ is continuous. Consequently, its upper zero level set $A(u,\tilde u):=\{\gamma\in\bR: \lambda^u(\gamma)\geq \lambda^{\tilde u}(\gamma)\}$ is closed. This implies closedness of $\Gamma(u)$ as
\[
\textstyle \Gamma(u)=\bigcap_{\tilde u\in\Pi'} A(u,\tilde u).
\]
It remains to show that $\Gamma(u)$ is a countable union of closed intervals. Since $|\Pi'|<\infty$, it is sufficient to show that for an arbitrary $\tilde u\in\Pi'$, the set $A(u,\tilde u)$ is a countable union of closed intervals. For simplicity, let us focus on the case where $\gamma\leq 0$; similar reasoning is valid for $\gamma\geq 0$. From the discussion in (2.) we get that the functions $\gamma\to\lambda^u(\gamma)$ and $\gamma\to\lambda^{\tilde u}(\gamma)$ are real analytic on $\bR$. Consequently, the zeros of the function $F_{u,\tilde u}$ are isolated in $\bR$ or $F_{u,\tilde u}\equiv 0$ on $\bR$. As $F_{u,\tilde u}$ is continuous, this directly implies that the set $A(u,\tilde u) $ is the countable union of intervals.

  \noindent (6.) Fix $u\in \Pi'$. We know already from (2.) that $\gamma\to \lambda^u(\gamma)$ is  bounded and non-decreasing. Thus, a limit $\lambda^u(\gamma)\to \lambda^u(\infty)$, as $\gamma\to\infty$, exists. Let us now assume that there is a unique $u^* \in \Pi'$ such that $\lambda^{u^*}(\infty)=\lambda(\infty)$. Again since for any $u\in \Pi'$ the function $\gamma\to \lambda^u(\gamma)$ is non-decreasing and converges to its upper value $\lambda^u(\gamma)$, there exists $\gamma_{\infty} >0$ such that
\[
\textstyle \lambda^{u*}(\gamma_{\infty})>\sup_{u\in\Pi'\setminus\{u^*\}}\lambda^u(\infty).
\]
This indicates that for any $\gamma>\gamma_{\infty}$ we get $\Pi_\gamma^*(1)=\{u^{*}\}$, which concludes the proof.
\end{proof}
In the next corollary, we present some interesting properties of optimal strategies in the finite setting that could be inferred from Theorem~\ref{th:MPE.closed} combined with Theorem~\ref{th:exist}.

\begin{corollary}\label{cor:1}
Assume \eqref{A.1} and \eqref{A.2}.  Then:
\begin{enumerate}
\item The family of sets $\{\Gamma(u)\}_{u\in\Pi'}$ is a closed finite cover of the risk-sensitivity parameter space, that is, $\bR=\bigcup_{u\in\Pi'}\Gamma (u)$ and for any $u\in\Pi'$ the set $\Gamma(u)$ is closed.
\item If there is a unique optimal Markov policy for $\gamma\in\bR$, that is, if $\Pi_{\gamma}^*=\{u\}$ for some $u\in\Pi'$, then there exists $\epsilon>0$ such that $[\gamma-\epsilon,\gamma+\epsilon] \subset \Gamma(u)$.
\item If for any $\gamma\in\bR$ there is a unique Markov policy, then there exists a globally optimal policy, i.e. $u\in\Pi'$ such that $\Gamma(u)=\bR$.
\item If $\gamma_n\to\gamma$ and $u_n\in\Pi'$ is optimal for $\gamma_n$, $n\in\bN$, then $\Pi^*_\gamma$ contains all accumulation points of $(u_n)$.
\end{enumerate}
\end{corollary}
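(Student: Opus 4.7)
The plan is to deduce all four statements from Theorem~\ref{th:MPE.closed} combined with elementary topology on $\bR$ and the fact that $\Pi'$ is finite. I do not anticipate a serious obstacle, since the corollary is essentially a topological repackaging of the structural properties of $\gamma\mapsto\lambda^u(\gamma)$ and $\Gamma(u)$ already established; the only slightly less mechanical step is (3.), which needs an explicit appeal to connectedness of $\bR$.

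For (1.), I would observe that for every $\gamma\in\bR$ the maximum $\max_{u\in\Pi'}\lambda^u(\gamma)$ is attained on the finite set $\Pi'$, and by parts (3.)--(4.) of Theorem~\ref{th:MPE.closed} any maximiser lies in $\Pi^*_\gamma$, so $\gamma\in\Gamma(u)$ for that $u$. Hence $\bR=\bigcup_{u\in\Pi'}\Gamma(u)$, and closedness of each $\Gamma(u)$ is exactly part (5.) of Theorem~\ref{th:MPE.closed}. For (2.), the assumption $\Pi^*_\gamma=\{u\}$ is equivalent to $\lambda^u(\gamma)>\lambda^{\tilde u}(\gamma)$ for every $\tilde u\in\Pi'\setminus\{u\}$. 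Continuity of each $\gamma\mapsto\lambda^{\tilde u}(\gamma)$ from Theorem~\ref{th:MPE.closed}(2.) preserves each strict inequality on a small neighbourhood of $\gamma$; minimising the finitely many resulting radii produces a single $\epsilon>0$ such that $u$ strictly dominates every competitor throughout $[\gamma-\epsilon,\gamma+\epsilon]$, which is exactly the required inclusion.

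The slightly more subtle step is (3.). If the optimal policy is unique at every $\gamma\in\bR$, then the sets $\Gamma(u)$ are pairwise disjoint, since a common point would produce two distinct optima. Combined with (1.), this realises $\bR$ as a finite disjoint union of closed sets. The complement of any single $\Gamma(u)$ in $\bR$ is then a finite union of closed sets, hence closed, making each $\Gamma(u)$ simultaneously open and closed. Connectedness of $\bR$ then forces all but one of these finitely many clopen sets to be empty, and the remaining one must equal $\bR$, yielding the globally optimal $u$.

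Finally, for (4.), I would use discreteness of the finite space $\Pi'$: any convergent subsequence $u_{n_k}\to u^*$ is eventually constantly equal to $u^*$, so $u^*\in\Pi^*_{\gamma_{n_k}}$ for all sufficiently large $k$, i.e.\ $\gamma_{n_k}\in\Gamma(u^*)$. Closedness of $\Gamma(u^*)$ from (1.) together with $\gamma_{n_k}\to\gamma$ then delivers $\gamma\in\Gamma(u^*)$, that is $u^*\in\Pi^*_\gamma$, which is precisely the claim.
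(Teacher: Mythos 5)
Your proof is correct and follows essentially the same route as the paper: both rest on the identification of $\Pi^*_\gamma$ with the maximisers of the continuous functions $\gamma\mapsto\lambda^u(\gamma)$, finiteness of $\Pi'$, and closedness of the sets $\Gamma(u)$ from Theorem~\ref{th:MPE.closed}. The only cosmetic difference is that the paper proves part (2.) by contradiction via an eventually constant subsequence of optimal policies, whereas you argue directly from persistence of the finitely many strict inequalities $\lambda^u(\gamma)>\lambda^{\tilde u}(\gamma)$; the paper also leaves the connectedness argument for part (3.) implicit, which you spell out.
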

\begin{proof}
The proof of 1. is a straightforward implication of Theorem~\ref{th:MPE.closed} combined with Theorem~\ref{th:exist}. Proof of 2. is by contradiction. Suppose that the statement is not correct. Then there exists a sequence $(\gamma_n)$ with $\gamma_n\to\gamma$ for $n\to\infty$ such that $u\notin \Pi_{\gamma_n}^*$. Let $u_n\in \Pi_{\gamma_n}^*$. Since $\Pi'$ is finite, there is a subsequence $m_n$ and a policy $\tilde u\neq u$ such that $u_{n_m}=\tilde u.$ Then, by Theorem \ref{th:MPE.closed}, policy $\tilde u$ also has to be optimal for $\gamma$, which leads to contradiction. The proof of 3) and 4) follows from 2) and Theorem~\ref{th:MPE.closed}.
\end{proof}

 \begin{remark}[Typical structure of the $\Gamma$ optimality sets]
 From Corollary~\ref{cor:1}, we can infer a typical structure of the set $\Gamma$, which is presented in Figure~\ref{F:Gamma}. Different colors belong to different policies, that is, $\Gamma(u_1)$ is blue, $\Gamma(u_2)$ is red, etc. On top of this, there can be a finite number of points where additional policies are optimal. At the endpoint of intervals, we have at least two different optimal policies. 
 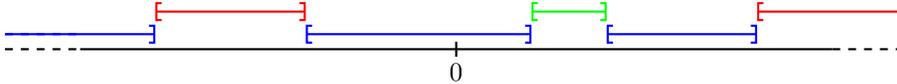
\begin{figure}[h]
    \centering
  \begin{tikzpicture}
   \draw[dashed, thick] (-6,-0.2) -- (-5,-0.2);
   \draw[thick] (-5,-0.2) -- (5,-0.2);
     \draw[dashed, thick] (5,-0.2) -- (6,-0.2);
 \draw[ {-]} , thick,blue] (-6,0) -- (-4,0);
   \draw[dashed, thick,blue] (-6,0) -- (-5,0);
  \draw[ {[-]}, thick,red] (-4,0.3) -- (-2,0.3);
 \draw[ {[-]} , thick,blue] (-2,0) -- (1,0);
   \draw[ {[-]}, thick,green] (1,0.3) -- (2,0.3);
    \draw[ {[-]} , thick,blue] (2,0) -- (4,0);
      \draw[ {[-}, thick,red] (4,0.3) -- (6,0.3);
    \draw[thick] (0,-0.3) -- (0,-0.1);  
     \draw (0,-0.5) node {0};
  \end{tikzpicture}
    \caption{Typical decomposition of the real line in sets $\Gamma(u)$.}\label{F:Gamma}
\end{figure}
\end{remark}

\begin{remark}[Link between risk-neutral and risk-averse optimal policies]
Theorem~\ref{th:MPE.closed} indicates that there exists a risk-neutral optimal policy which is also optimal for the risk-averse problem when $\gamma<0$ is sufficiently close to zero. Similarly, there exists a risk-neutral optimal policy, which is also optimal for the risk-seeking problem when $\gamma>0$ is sufficiently close to zero. That being said, the set of such policies might be disjoint as illustrated in Example~\ref{ex:1}. On the other hand, Corollary \ref{cor:1} implies that if there is a unique risk-neutral policy, then this policy is also optimal for small $\gamma>0$ and small $\gamma<0$.
\end{remark}

\section{Vanishing discount approach and Blackwell stability of risk-sensitive optimal policies}\label{sec:blackwell}

In this section, we want to investigate whether one can solve the long-run averaged risk sensitive stochastic control problem by considering a series of discounted risk sensitive stochastic control problems with increasing discount factor (up to 1). In particular, we are interested in the {\it Blackwell property} that ensures the averaged optimality of a given optimal discounted-type policy above a certain discount factor threshold.

Before we present the key results of this section, we need to introduce additional notation related to the discounted risk-sensitive framework. We start with the discounted equivalent of the objective function provided in \eqref{eq:RSC}. For a pre-specified risk-averse parameter $\gamma\in\bR$ as well as discount factor $\beta\in (0,1)$ the  {\it discounted long run risk-sensitive criterion} is given by
\begin{equation}\label{eq:RSC.disc}
J_\gamma(x,\pi; \beta):=
\begin{cases}
\tfrac{1}{\gamma}\ln\bE_x^{\pi}\left[e^{\gamma\sum_{i=0}^{\infty}\beta^i c(X_i,a_i)}\right], & \textrm{if }\gamma\neq 0,\\
\bE_x^{\pi}\left[\sum_{i=0}^{\infty}\beta^i c(X_i,a_i)\right], & \textrm{if } \gamma=0.
\end{cases}
\end{equation}
For brevity, and with a slight abuse of notation, we also set $J_{\gamma}(x,\pi;1):=J_{\gamma}(x,\pi)$. Also, note that \eqref{eq:RSC.disc} could be represented with the entropic utility, i.e. we get
\[
\textstyle J_\gamma(x,\pi;\beta)=\ent_x^{\pi}\left(\sum_{i=0}^{\infty}\beta^i c(X_i,a_i),\gamma\right).
\]
Next, we can also introduce an equivalent of the Bellman equation~\eqref{eq:Bellman1} considering
\begin{equation}\label{eq:Bellman2}
\begin{cases}
\textstyle w^{\beta}(x, \gamma)=\max_{a\in U}\left[\gamma c(x,a)+\ln\sum_{y\in E}e^{w^{\beta}(y,\beta \gamma)}\bP^a(x,y)\right],& \textrm{if } \gamma> 0,\\
\textstyle w^{\beta}(x, \gamma)=\min_{a\in U}\left[\gamma c(x,a)+\ln\sum_{y\in E}e^{w^{\beta}(y,\beta \gamma)}\bP^a(x,y)\right],& \textrm{if } \gamma< 0,\\
 w^{\beta}(x,\gamma)=\max_{a\in U}\left[c(x,a)+\beta\sum_{y\in E} w^{\beta}(y,\gamma)\bP^a(x,y)\right],& \textrm{if }\gamma=0,
\end{cases}
\end{equation}
for function $w^{\beta}: E\times \bR \to \bR$; for technical reasons, for $\gamma\neq 0$, we renormalized the value function by $\gamma^{-1}$ in \eqref{eq:Bellman2} and split the formula into the risk-seeking, risk-averse, and risk-neutral case. One can show that in the discounted case the optimal value of the initial problem can be directly recovered from \eqref{eq:Bellman2}, that is, we get
\[
J^*_\gamma(x;\beta):=\sup_{\pi\in\Pi}J_\gamma(x,\pi; \beta)=w^{\beta}(x,\gamma)/\gamma,
\]
and there is no need to introduce the long-run averaging value $\lambda$; for brevity, we omit the detailed proof of this fact. On the other hand, we note that while in \eqref{eq:Bellman1} the function $w$ was effectively defined for a single isolated (prefixed) $\gamma\in\bR$, this is not the case for \eqref{eq:Bellman2} (and $\gamma\neq 0)$, where it is necessary to consider $w^{\beta}$ on the risk-aversion grid $(\beta^n\gamma)_{n\in\bN}$ in order to allow the recursive scheme. Given the initial parameter $\gamma >0$ (resp. $\gamma<0$) the function $w^{\beta}$ in \eqref{eq:Bellman2} can be defined on $E\times (0,\gamma]$ (resp. $E\times [\gamma,0)$). In particular, this indicates that the chosen optimal strategy might vary between iteration steps and the recursive procedure might lead to the emergence of a non-stationary optimal policy, see Example~\ref{ex:non-stat} where we show this might indeed be the case. For consistency, we also expand the notation introduced in \eqref{eq:Pi*}-\eqref{eq:Gamma} and for any $\beta\in (0,1)$, $\gamma\in\bR$ and $u\in\Pi$, introduce sets
\[
\Pi^{*}_{\gamma}(\beta):=\{\pi\in\Pi''\colon  J_{\gamma}(x,\pi;\beta)=J^*_{\gamma}(x;\beta),\, x\in E\},\quad 
\Gamma(\pi;\beta):=\{\gamma\in\bR\colon \pi\in \Pi^{*}_{\gamma}(\beta)\};
\]
note that in contrast to the previous section, we do not restrict the definitions to the stationary Markov policies. 

The central point of the vanishing discount approach is to relate the value of $J^*(x;\beta)$ to $J^*(x)$, when $\beta\to 1$, as well as to link the respective optimal policies. While in the risk-neutral case the link could typically be expressed using a simple relationship
\begin{equation}\label{eq:vanishing.intro}
\lim_{\beta\uparrow 1}(1-\beta)J_{0}^{*}(x,\beta)=J_{0}^*(x),
\end{equation}
this is no longer the case for risk-sensitive case, as one needs to take into account potential non-stationarity of the induced strategies, non-additive nature of the underlying risk-sensitive functional, impact of the span-norm usage, etc. We refer to \cite{CavFer2000, CavHer2017} for variations of formula \eqref{eq:vanishing.intro} in the risk-sensitive framework and for more details on this subject. Still, one expects that the policies optimal for the discounted problem for $\beta$s close to 1 stabilize and become optimal for the averaged problem. This concept is linked to the Blackwell property in the risk-neutral setup. For completeness, let us recall this classical result, we refer to \cite{Bla1962,DewGal2022,GraPet2023} for a more comprehensive discussion about the related concept of Blackwell optimality.

\begin{theorem}\label{th:blackwell}
Assume \eqref{A.1} and fix $\gamma=0$.  Then, there exists a stationary Markov policy $\pi\in\Pi'$ which satisfies the Blackwell property, i.e. there is $\beta_0\in (0,1)$ such that for any $\beta\in (\beta_0,1)$ we have $\pi\in \Pi^*_0(\beta)$. Furthermore, we have $\pi \in \Pi^*_{0}$.
\end{theorem}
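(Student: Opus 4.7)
The plan is to combine the classical rational-function / isolated-zero argument for finite-state risk-neutral discounted MDPs with a vanishing-discount passage enabled by \eqref{A.1}. First, I would observe that for every stationary $u\in\Pi'$, the risk-neutral discounted value satisfies the standard resolvent formula
\[
J_0(\cdot,u;\beta)=(I-\beta\bP^u)^{-1}c^u,\qquad \beta\in[0,1),
\]
where $\bP^u$ is the transition matrix associated with $u$ and $c^u(x):=c(x,u(x))$. Cramer's rule expresses each entry as a ratio of polynomials in $\beta$ whose denominator $\det(I-\beta\bP^u)$ is nonzero on $[0,1)$, so $\beta\mapsto J_0(x,u;\beta)$ is real analytic on $[0,1)$ for every $x\in E$ and every $u\in\Pi'$.

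Second, for any two policies $u,u'\in\Pi'$ and any $x\in E$, the difference $\Delta_{x,u,u'}(\beta):=J_0(x,u;\beta)-J_0(x,u';\beta)$ is real analytic on $[0,1)$, so it is either identically zero or has only isolated zeros; in either case there exists $\beta_{x,u,u'}\in(0,1)$ such that the sign of $\Delta_{x,u,u'}$ stays constant on $(\beta_{x,u,u'},1)$. Since there are only finitely many such triples, I would take $\beta_0$ to be the maximum of the $\beta_{x,u,u'}$'s; this freezes the preference ordering of stationary policies on $(\beta_0,1)$ simultaneously for every starting point. Invoking the classical existence of a stationary Markov optimal policy for the finite-state risk-neutral discounted problem (obtainable, for instance, from the Bellman equation \eqref{eq:Bellman2} with $\gamma=0$), for each $\beta\in(\beta_0,1)$ some $u_\beta\in\Pi'$ satisfies $J_0(x,u_\beta;\beta)=J_0^*(x;\beta)$ for every $x\in E$; combined with the frozen ordering this produces a single $\pi\in\Pi'$ with $J_0(x,\pi;\beta)=J_0^*(x;\beta)$ for all $x\in E$ and all $\beta\in(\beta_0,1)$. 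Since $\Pi'\subset\Pi''$, this is precisely $\pi\in\Pi_0^*(\beta)$ on $(\beta_0,1)$, i.e.\ the Blackwell property.

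Third, to conclude $\pi\in\Pi_0^*$ I would appeal to the Tauberian vanishing-discount identities
\[
(1-\beta)J_0(x,u;\beta)\to\lambda^u(0)\quad\text{and}\quad (1-\beta)J_0^*(x;\beta)\to\lambda(0)\quad\text{as }\beta\uparrow 1,
\]
valid for every $u\in\Pi'$ and every $x\in E$ under \eqref{A.1}. Applied with $u=\pi$ on any $\beta\in(\beta_0,1)$, where $J_0(x,\pi;\beta)=J_0^*(x;\beta)$, this forces $\lambda^\pi(0)=\lambda(0)$, whence Theorem~\ref{th:MPE.closed}(3)--(4) yields $\pi\in\Pi_0^*$. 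The main obstacle I anticipate is this last step: although the vanishing-discount limit is classical, its rigorous justification requires uniform-in-$\beta$ control inherited from Theorem~\ref{th:local.contraction}, so that the span-norm of $J_0^*(\cdot;\beta)-(1-\beta)^{-1}\lambda(0)$ stays bounded as $\beta\uparrow 1$. The remaining rational-function and isolated-zero steps are routine in the finite risk-neutral setting.
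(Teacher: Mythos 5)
Your argument is correct in substance, but it is not the route the paper takes. The paper's own proof (sketched in Appendix~\ref{S:Blackwell.neutral}) deliberately avoids the resolvent/Cramer's-rule machinery: it takes the uniform span bound $\|w^{\beta}\|_{\textnormal{sp}}\le 2\|c\|/(1-\Delta)$ supplied by \eqref{A.1}, centres the discounted value at a reference state $\bar z$, extracts a convergent subsequence as $\beta\uparrow 1$ to recover the averaged Bellman equation (the risk-neutral analogue of Theorem~\ref{th:vanishing.discount}), and then runs the finiteness-plus-contradiction argument of Theorem~\ref{th:blackwell.sensitive}: if no decision rule selected by the discounted Bellman equation were eventually average-optimal, one could pass to a subsequence of discount factors along which the selected rule is a fixed $\hat u$, take limits in the discounted equation, and conclude that $\hat u$ solves the averaged Bellman equation after all. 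The point of that proof is precisely that it survives the passage to $\gamma\neq 0$, where your linear resolvent formula $J_0(\cdot,u;\beta)=(I-\beta\bP^u)^{-1}c^u$ --- the engine of your first two steps --- has no analogue and discounted optimal policies need not even be stationary. What your approach buys instead is stronger structural information in the risk-neutral case: a single policy optimal for \emph{all} $\beta$ in one interval $(\beta_0,1)$ simultaneously, together with the decomposition of $[0,1)$ into intervals of constant optimal rule. Two small points to tighten: (i) ``real analytic on $[0,1)$, hence isolated zeros, hence eventually constant sign'' is not a valid implication on its own, since zeros of an analytic function on $[0,1)$ may accumulate at the endpoint $1$; you need the rationality you established via Cramer's rule, which gives \emph{finitely many} zeros. (ii) Your appeal to Theorem~\ref{th:MPE.closed}(3)--(4) formally imports hypothesis \eqref{A.2}, which Theorem~\ref{th:blackwell} does not assume; this is harmless here (the paper notes \eqref{A.2} is not needed when $\gamma=0$), but it is cleaner to close by identifying $\lambda^{\pi}(0)=J_0(x,\pi)$ and $\lambda(0)=J_0^*(x)$ directly.
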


For brevity, we omit the proof of Theorem~\ref{th:blackwell}, see e.g. the proof of Theorem 10.1.4 and Corollary 8.2.5 in \cite{Put2009}. Unfortunately, Theorem~\ref{th:blackwell} and its proof cannot be directly extended to the risk-sensitive framework due to two main reasons: (1) the classical proof is based on the fact that optimal solutions could be represented as rational functions (w.r.t. $\beta$) which does not transfer to the multiplicative framework; (2) in contrast to the risk-neutral setting, the set of optimal policies for the risk-sensitive discounted problem could only contain non-stationary policies.

\subsection{Using discounted framework to approximate optimal strategies: risk-sensitive vanishing discount approach}
In this section we state the selected properties of optimal discounted problem values and policies when $\beta\to 1$. If not stated otherwise, in this section we focus on the case $\gamma<0$; the proofs and formulas for $\gamma>0$ are similar. We start with a simple proposition that puts a uniform bound on the value function.
\begin{proposition}\label{pr:w.disc.bound}
Assume \eqref{A.1} and \eqref{A.2}; fix $\gamma\neq 0$. Then, for any $\beta\in (0,1)$ we have
\[
\|w^{\beta}(\cdot,\gamma)\|_{\textnormal{sp}}\leq |\gamma|N\|c\|+\tfrac{1}{2}\ln K,
\]
where $N\in\bN$ and $K<\infty$ are constants stated in \eqref{A.2}.
\end{proposition}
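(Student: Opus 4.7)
The plan is to exploit the probabilistic representation of $w^{\beta}$ combined with a Markov splitting at horizon $N$ from \eqref{A.2}. Since $J^*_\gamma(x;\beta)=w^\beta(x,\gamma)/\gamma$ and $J^*_\gamma(x;\beta)=\sup_\pi \frac{1}{\gamma}\ln \bE^\pi_x[e^{\gamma S^\pi}]$ with $S^\pi:=\sum_{i=0}^\infty \beta^i c(X_i,a_i)$, an elementary sign manipulation gives
\[
w^\beta(x,\gamma) \;=\; \mathrm{opt}_{\pi}\, \ln \bE_x^{\pi}\bigl[e^{\gamma S^\pi}\bigr],
\]
with $\mathrm{opt}=\sup$ for $\gamma>0$ and $\mathrm{opt}=\inf$ for $\gamma<0$; this identity is what will let me turn a span-norm bound on $w^\beta$ into a ratio bound on exponential moments. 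I focus on $\gamma>0$; the case $\gamma<0$ is symmetric.

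Fix $x,x'\in E$ and let $\pi^{*}=(u_0,u_1,\ldots)\in\Pi''$ be a (possibly non-stationary) Markov policy attaining the supremum at the starting state $x$ (such a policy exists by iterating the Bellman equation \eqref{eq:Bellman2}). Using $\pi^{*}$ as a suboptimal candidate at $x'$ gives $w^\beta(x',\gamma)\ge \ln \bE_{x'}^{\pi^{*}}[e^{\gamma S^{\pi^{*}}}]$, so $w^\beta(x,\gamma)-w^\beta(x',\gamma)\le \ln \bE_x^{\pi^{*}}[e^{\gamma S^{\pi^{*}}}]-\ln \bE_{x'}^{\pi^{*}}[e^{\gamma S^{\pi^{*}}}]$. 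I would then split the trajectory at time $N$ by writing $S^{\pi^{*}}=S_1+\beta^N R$ with $S_1:=\sum_{i=0}^{N-1}\beta^i c(X_i,u_i(X_i))$ and $R:=\sum_{i=0}^\infty\beta^i c(X_{N+i},u_{N+i}(X_{N+i}))$, noting $|S_1|\le N\|c\|$ since $\beta<1$. Conditioning on $\cF_N$ and using the Markov structure of $\pi^{*}$ yields
\[
\bE_z^{\pi^{*}}[e^{\gamma S^{\pi^{*}}}]\;=\;\sum_{y\in E} A_z(y)\,h(y),\qquad z\in\{x,x'\},
\]
where $A_z(y):=\bE_z^{\pi^{*}}[e^{\gamma S_1}\1_{X_N=y}]$ and $h(y):=\bE_y^{(u_N,u_{N+1},\ldots)}[e^{\gamma\beta^N R}]>0$; crucially, $h$ depends only on $y$ and the shifted tail policy, not on the starting state $z$.

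From $|\gamma S_1|\le |\gamma|N\|c\|$ I obtain $e^{-|\gamma|N\|c\|}\bP^{(\pi^{*},N)}(x,y)\le A_x(y)\le e^{|\gamma|N\|c\|}\bP^{(\pi^{*},N)}(x,y)$ and the analogous bound for $x'$. Combining these with \eqref{A.2} in the form $\bP^{(\pi^{*},N)}(x,y)\le K\,\bP^{(\pi^{*},N)}(x',y)$ yields $A_x(y)\le K e^{2|\gamma|N\|c\|}A_{x'}(y)$. Summing against the positive weights $h(y)$ gives $\bE_x^{\pi^{*}}[e^{\gamma S^{\pi^{*}}}]\le K e^{2|\gamma|N\|c\|}\bE_{x'}^{\pi^{*}}[e^{\gamma S^{\pi^{*}}}]$, so taking logarithms delivers $w^\beta(x,\gamma)-w^\beta(x',\gamma)\le 2|\gamma|N\|c\|+\ln K$ uniformly in $x,x'\in E$; halving the resulting $\sup/\inf$ gap produces the stated span-norm bound. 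The only subtlety, and mild obstacle, is treating $\gamma<0$ uniformly: the splitting itself is insensitive to the sign of $\gamma$ (the factor $h$ remains strictly positive), and for $\gamma<0$ one simply selects $\pi^{*}$ achieving the infimum at $x'$ in place of the supremum at $x$, after which the estimates transfer verbatim.
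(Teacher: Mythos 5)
Your proof is correct and follows essentially the same route as the paper's: iterate/condition over the first $N$ steps, bound the running-reward contribution by $|\gamma|N\|c\|$ in absolute value, and use \eqref{A.2} to bound the ratio of the two terminal expectations (your weighted sum $\sum_y A_z(y)h(y)$ with positive weights $h(y)$ is exactly the paper's estimate \eqref{eq:disc.bound.2}, with $h(y)$ playing the role of $e^{w^{\beta}(y,\gamma\beta^N)}$). The only cosmetic difference is that the paper keeps the optimization inside the $N$-fold iterated Bellman equation rather than fixing an optimal Markov policy up front.
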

\begin{proof}
Fix $\beta<1$ and $\gamma<0$; the proof for $\gamma>0$ is similar. Let $N\in\bN$ and $K<\infty$ be as in \eqref{A.2}. Iterating the Bellman equation \eqref{eq:Bellman2} $N$ times, we get
\begin{equation}\label{eq:bellman.n.times}
w^{\beta}(x,\gamma)=\inf_{\pi\in \Pi''}\ln\bE^{\pi}_x\left[e^{\gamma\sum_{i=0}^{N-1}\beta^i c(X_i,a_i)+w^{\beta}(X_N,\gamma\beta^N)}\right].
\end{equation}
Thus, bounding the sum in \eqref{eq:bellman.n.times} by $\pm N\|c\|$, we get
\begin{equation}\label{eq:disc.bound.1}
|w^{\beta}(x, \gamma)- w^{\beta}(y, \gamma)|  \leq 2|\gamma|N\|c\|+\sup_{\pi\in \Pi''}\ln\frac{\bE^{\pi}_x[e^{w^{\beta}(X_N,\gamma\beta^N)}]}{\bE^{\pi}_y[e^{w^{\beta}(X_N,\gamma\beta^N)}]},
\end{equation}
for any $x,y\in E$. Also, using \eqref{A.2}, for any $x,y\in E$ and $\pi\in \Pi''$, we have
\begin{equation}\label{eq:disc.bound.2}
\frac{\bE^{\pi}_x[e^{w^{\beta}(X_N,\gamma\beta^N)}]}{\bE^{\pi}_y[e^{w^{\beta}(X_N,\gamma\beta^N)}]} = K+\frac{\sum_{z\in E}e^{w^{\beta}(z,\gamma\beta^N)}[\bP^{(\pi,N)}(x,z)-K\bP^{(\pi,N)}(y,z)]}{\sum_{z\in E}e^{w^{\beta}(z,\gamma\beta^N)}\bP^{(\pi,N)}(y,z)}\leq K.
\end{equation}
Combining \eqref{eq:disc.bound.1} and \eqref{eq:disc.bound.2}, we conclude the proof
\end{proof}
Next, we follow the standard vanishing discount framework in which we center the value function to link span-norm into standard supremum norm and define the iterated incremental value. Namely, let us fix $\bar z \in E$ and set
\begin{align}
\bar w^{\beta}(x,\gamma) &:=w^{\beta}(x,\gamma)-w^{\beta}(\bar z,\gamma),\label{eq:vanishing1}\\
\lambda_n^{\beta}(\gamma) &:= w^{\beta}(\bar z,\gamma\beta^n)- w^{\beta}(\bar z,\gamma\beta^{n+1}),\label{eq:vanishing2}\\
\bar w_n^{\beta}(x,\gamma) &:=\bar w^{\beta}(x,\gamma\beta^n),\label{eq:vanishing3}
\end{align}
for $n\in\bN$, $\beta\in (0,1)$, $\gamma<0$, and $x\in E$. For $\gamma<0$, we can use \eqref{eq:vanishing1}--\eqref{eq:vanishing3} to reformulate \eqref{eq:Bellman2} as
\begin{equation}\label{eq:bellman.vanishing1}
\textstyle \bar w^{\beta}(x, \gamma)=\min_{a\in U}\left[\gamma c(x,a)-\lambda_0^{\beta}(\gamma)+\ln\sum_{y\in E}e^{\bar w^{\beta}(y,\beta \gamma)}\bP^a(x,y)\right].
\end{equation}
Furthermore, for $\gamma<0$ and $n\in\bN$, we have
\begin{equation}\label{eq:bellman.vanishing2}
\textstyle \bar w_n^{\beta}(x, \gamma)=\min_{a\in U}\left[\gamma\beta^n c(x,a)-\lambda_n^{\beta}(\gamma)+\ln\sum_{y\in E}e^{\bar w_{n+1}^{\beta}(y,\gamma)}\bP^a(x,y)\right].
\end{equation}
Now, let us show that the values $(\lambda_n^{\beta}(\gamma))$ are uniformly bounded w.r.t.\ \ $\beta$ and $n\in\bN$.

\begin{proposition}\label{pr:lambda.n.bound}
Assume \eqref{A.1} and \eqref{A.2}; fix $\gamma\neq 0$. Then, we have
\[
\sup_{\beta\in (0,1)}\sup_{n\in\bN}|\lambda_n^{\beta}(\gamma)|\leq |\gamma|\|c\|(1+2N)+\ln K,
\]
where $N\in\bN$ and $K<\infty$ are the constants stated in \eqref{A.2}.
\end{proposition}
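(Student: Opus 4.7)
The plan is to combine the one-step Bellman recursion \eqref{eq:Bellman2} with the span-norm bound from Proposition~\ref{pr:w.disc.bound}. The key observation is that $\lambda_n^{\beta}(\gamma)$ is a \emph{one-step} increment, so a single iteration of the Bellman equation (not $N$ iterations) suffices, and the $N$ enters only implicitly through the span bound of $w^{\beta}(\cdot,\gamma\beta^{n+1})$.

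Concretely, I would first treat the case $\gamma<0$ (the case $\gamma>0$ being symmetric with $\min$ replaced by $\max$). Applying \eqref{eq:Bellman2} at the state $\bar z$ with parameter $\gamma\beta^{n}$ gives
\[
w^{\beta}(\bar z, \gamma\beta^{n})=\min_{a\in U}\Bigl[\gamma\beta^{n} c(\bar z,a)+\ln\sum_{y\in E}e^{w^{\beta}(y,\gamma\beta^{n+1})}\bP^{a}(\bar z,y)\Bigr].
\]
Subtracting $w^{\beta}(\bar z,\gamma\beta^{n+1})$ from both sides and pulling this constant inside the logarithm, I obtain
\[
\lambda_n^{\beta}(\gamma)=\min_{a\in U}\Bigl[\gamma\beta^{n} c(\bar z,a)+\ln\sum_{y\in E}e^{w^{\beta}(y,\gamma\beta^{n+1})-w^{\beta}(\bar z,\gamma\beta^{n+1})}\bP^{a}(\bar z,y)\Bigr].
\]

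Next, I would bound the two summands separately. The first term is controlled by $|\gamma\beta^{n} c(\bar z,a)|\leq |\gamma|\,\|c\|$ since $\beta\in(0,1)$. For the logarithmic term, write $M:=2\|w^{\beta}(\cdot,\gamma\beta^{n+1})\|_{\textnormal{sp}}$; then every exponent inside the sum lies in $[-M,M]$, so that the weighted average (a convex combination) of the exponentials also lies in $[e^{-M},e^{M}]$, and its logarithm lies in $[-M,M]$. Applying Proposition~\ref{pr:w.disc.bound} yields
\[
M\leq 2\bigl(|\gamma\beta^{n+1}|N\|c\|+\tfrac{1}{2}\ln K\bigr)\leq 2|\gamma|N\|c\|+\ln K.
\]

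Combining the two estimates gives $|\lambda_n^{\beta}(\gamma)|\leq |\gamma|\|c\|+2|\gamma|N\|c\|+\ln K=|\gamma|\|c\|(1+2N)+\ln K$, uniformly in $n\in\bN$ and $\beta\in(0,1)$. For $\gamma>0$ the same chain of inequalities applies, with the $\min$ in the Bellman equation replaced by $\max$, giving identical bounds on $|\lambda_n^{\beta}(\gamma)|$. No step is really an obstacle here; the only slightly delicate point is recognizing that centering the value function by the constant $w^{\beta}(\bar z,\gamma\beta^{n+1})$ before invoking the span bound is what reduces the problem to the already-established Proposition~\ref{pr:w.disc.bound}.
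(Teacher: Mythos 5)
Your proof is correct and follows essentially the same route as the paper's: one step of the Bellman recursion \eqref{eq:Bellman2}, the trivial bound $|\gamma\beta^n c|\le|\gamma|\|c\|$ on the reward term, and the span bound of Proposition~\ref{pr:w.disc.bound} to control the log-sum term, yielding the identical constant $|\gamma|\|c\|(1+2N)+\ln K$. The only cosmetic difference is that you evaluate directly at $\bar z$ and treat both signs via absolute values, whereas the paper derives the two-sided estimate at a general state $x$ first.
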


\begin{proof}
Fix $\gamma<0$; the proof for $\gamma>0$ is similar. Using Proposition~\ref{pr:w.disc.bound} and \eqref{eq:Bellman2}, for any $\beta\in (0,1)$, $n\in\bN, x\in E$, and (optimal) $a\in U$, we have
\begin{align*}
w^{\beta}(x, \gamma\beta^n)& \leq |\gamma|\beta^n \|c\|+\ln\sum_{y\in E}e^{w^{\beta}(y,\beta^{n+1} \gamma)}\bP^a(x,y)\\
& \leq w^{\beta}(x,\beta^{n+1} \gamma)+|\gamma|\beta^n \|c\|+\ln\sum_{y\in E}e^{|w^{\beta}(y,\beta^{n+1} \gamma)-w^{\beta}(x,\beta^{n+1} \gamma)|}\bP^a(x,y)\\
& \leq w^{\beta}(x,\beta^{n+1} \gamma)+|\gamma|\beta^n \|c\|+2|\gamma\beta^{n+1}|N\|c\|+\ln K\\
&\leq w^{\beta}(x,\beta^{n+1} \gamma)+|\gamma|\|c\|(1+2N)+\ln K. 
\end{align*}
Consequently, we get $\lambda_n^{\beta}(\gamma)\leq|\gamma|\|c\|(1+2N)+\ln K$, for any $n\in\bN$ and $\beta\in (0,1)$. Using similar reasoning, for $n\in\bN$ and $\beta\in (0,1)$, we get
\[
w^{\beta}(x, \gamma\beta^n) \geq w^{\beta}(x,\beta^{n+1} \gamma)-|\gamma|\|c\|(1+2N)-\ln K, 
\]
which implies $\lambda_n^{\beta}(\gamma)\geq -|\gamma|\|c\|(1+2N)-\ln K$, and concludes the proof.
\end{proof}
Next, we are ready to state the vanishing discount convergence result which is the key result of this section.

\begin{theorem}\label{th:vanishing.discount}
Assume \eqref{A.1} and \eqref{A.2}; fix $\gamma\neq 0$. Then, there exists a function $ w(\cdot,\gamma)\colon E\to \bR$ and a constant $\lambda(\gamma)\in\bR$, such that, for $n\in\bN$, we have
\[
\lim_{\beta\uparrow1} \bar w_n^{\beta}(x,\gamma)= w(x,\gamma)\quad \textrm{and}\quad \lim_{\beta\uparrow 1}\lambda_{n}^{\beta}(\gamma)=\lambda(\gamma).
\]
Moreover, the function $w(\cdot,\gamma)$ and the constant $\lambda(\gamma)$ solve the Bellman equation \eqref{eq:Bellman1}. 
\end{theorem}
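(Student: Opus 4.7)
The plan is the classical vanishing-discount compactness argument, combined with a span-contraction step that collapses the shift-index limits, followed by an appeal to the uniqueness in Theorem~\ref{th:exist}. I would treat the case $\gamma<0$; the case $\gamma>0$ is analogous, with \textnormal{min} replaced by \textnormal{max}.

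First, I would combine Propositions~\ref{pr:w.disc.bound} and~\ref{pr:lambda.n.bound} to extract uniform bounds: $\|\bar w_n^\beta(\cdot,\gamma)\|_{\textnormal{sp}}$ is controlled by applying Proposition~\ref{pr:w.disc.bound} at risk-aversion level $\gamma\beta^n$ (whose absolute value is at most $|\gamma|$), and since $\bar w_n^\beta(\bar z,\gamma)=0$ this upgrades to a uniform $L^\infty$ bound, while $|\lambda_n^\beta(\gamma)|$ is bounded by Proposition~\ref{pr:lambda.n.bound}. Because $E$ is finite and $\bN$ is countable, a diagonal extraction then produces a sequence $\beta_k\uparrow 1$ and limits $w_n(\cdot,\gamma)\colon E\to\bR$, $\lambda_n(\gamma)\in\bR$, with $w_n(\bar z,\gamma)=0$, such that $\bar w_n^{\beta_k}(x,\gamma)\to w_n(x,\gamma)$ and $\lambda_n^{\beta_k}(\gamma)\to\lambda_n(\gamma)$ for every $n\in\bN$ and $x\in E$. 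Since $\beta_k^n\to 1$ for each fixed $n$, continuity of the $\min$ over the finite set $U$ and of the log/exp terms lets me pass to the limit in~\eqref{eq:bellman.vanishing2}, obtaining
\[
w_n(x,\gamma)=\min_{a\in U}\Bigl[\gamma c(x,a)-\lambda_n(\gamma)+\ln\sum_{y\in E}e^{w_{n+1}(y,\gamma)}\bP^a(x,y)\Bigr],\qquad n\in\bN.
\]

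The crux will be showing that these limits are independent of $n$. Setting $\hat w_n:=w_n/\gamma$ and dividing the recursion above by $\gamma<0$ (so the $\min$ becomes a $\max$) yields $\hat w_n=T_\gamma\hat w_{n+1}-(\lambda_n(\gamma)/\gamma)\mathbf{1}$, where $T_\gamma$ is the Bellman operator from~\eqref{eq:Bellman.op}. Additive constants vanish under the span semi-norm, so
\[
\|\hat w_n-\hat w_{n+1}\|_{\textnormal{sp}}=\|T_\gamma\hat w_{n+1}-T_\gamma\hat w_{n+2}\|_{\textnormal{sp}}\le\Lambda(M)\|\hat w_{n+1}-\hat w_{n+2}\|_{\textnormal{sp}},
\]
with $M:=\sup_n\|\hat w_n\|_{\textnormal{sp}}<\infty$ inherited from the uniform span bound and $\Lambda(M)<1$ by Theorem~\ref{th:local.contraction}. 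Iterating this estimate $k$ times gives $\|\hat w_n-\hat w_{n+1}\|_{\textnormal{sp}}\le\Lambda(M)^k\cdot 2M\to 0$ as $k\to\infty$, so $\hat w_n-\hat w_{n+1}$ is constant in $x$; the common normalization $\hat w_n(\bar z)=0$ then forces $\hat w_n=\hat w_{n+1}$, and substituting back into the recursion yields $\lambda_n(\gamma)=\lambda_{n+1}(\gamma)$.

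Writing $w:=w_0$ and $\lambda:=\lambda_0$, the collapsed limiting equation reads $\hat w=T_\gamma\hat w-(\lambda/\gamma)\mathbf{1}$, i.e., $(w/\gamma,\lambda/\gamma)$ solves the averaged Bellman equation~\eqref{eq:Bellman1}. Uniqueness of that solution up to an additive constant (Theorem~\ref{th:exist}), together with the normalization $w(\bar z,\gamma)=0$, shows that every convergent subsequence of $(\bar w_n^\beta,\lambda_n^\beta)_{\beta\uparrow 1}$ produces the same limit, so the full limit exists as $\beta\uparrow 1$. The main obstacle is precisely the independence-of-$n$ step above: because each $\bar w_n^\beta$ is the centered value function at a \emph{different} risk-aversion parameter $\gamma\beta^n$, distinct values of $n$ could a priori yield distinct limits, and only the contractive structure of $T_\gamma$---iterated backwards using the uniform span bound---forces them to collapse onto a common fixed point.
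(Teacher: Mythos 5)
Your proposal is correct and follows essentially the same route as the paper's proof: uniform bounds from Propositions~\ref{pr:w.disc.bound} and~\ref{pr:lambda.n.bound}, diagonal extraction over the finite state space, passage to the limit in the shifted recursion~\eqref{eq:bellman.vanishing2}, the span-contraction property of $T_\gamma$ from Theorem~\ref{th:local.contraction} to collapse the $n$-dependence, and uniqueness of the averaged Bellman solution to upgrade subsequential convergence to full convergence as $\beta\uparrow 1$. The only cosmetic difference is that you contract consecutive differences $\|\hat w_n-\hat w_{n+1}\|_{\textnormal{sp}}$ directly, whereas the paper compares each $\tilde w_n$ to the fixed point via iterates $T_\gamma^k\tilde w_{n+k}$; both hinge on the same uniform span bound and yield the same conclusion.
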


\begin{proof}
Fix $\gamma<0$; the proof for $\gamma>0$ is similar. Recalling that $E$ is finite and using propositions~\ref{pr:lambda.n.bound} and \ref{pr:w.disc.bound}, we know that there exists an increasing sequence $(\beta_k)_{k\in\bN}$, a sequence of functions $(w_n(\cdot,\gamma))_{n\in\bN}$, where $w_n(\cdot,\gamma)\colon E\to \bR$, and a sequence of constants $(\lambda_n(\gamma))_{n\in\bN}$, where $\lambda_n(\gamma)\in\bR$, such that $\beta_k\to 1$, as $k\to\infty$, and for each $n\in\bN$, we have
\[
\lambda_n^{\beta_k}(\gamma)\to\lambda_n(\gamma)\quad\textrm{and}\quad \bar w_n^{\beta_k}(x,\gamma)\to w_n(x,\gamma),\quad \textrm{as}\quad k\to\infty.
\]
Letting $k\to\infty$ and using \eqref{eq:bellman.vanishing2}, we find that for each $n\in\bN$, we have
\begin{equation}\label{eq:limit.bellman}
\textstyle w_n(x, \gamma)=\min_{a\in U}\left[\gamma c(x,a)-\lambda_n(\gamma)+\ln\sum_{y\in E}e^{ w_{n+1}(y,\gamma)}\bP^a(x,y)\right].
\end{equation}
Moreover, for each $n\in\bN$, using Proposition~\ref{pr:w.disc.bound}, Proposition~\ref{pr:lambda.n.bound}, and recalling that $\bar w^{\beta}(\bar z,\gamma)=0$, for any $\beta\in (0,1)$, we get 
\begin{align}
w_n(\bar z, \gamma) & =0,\label{eq:vanishing4}\\
|w_n(\cdot,\gamma)|& \leq |\gamma| \|c\| 2N+\ln K,\label{eq:vanishing5}\\
|\lambda_n(\gamma)|& \leq |\gamma| \|c\| (2N+1)+ \ln K.\label{eq:vanishing6}
\end{align}
For $n\in \bN$, let $\tilde w_n(\cdot,\gamma):=\tfrac{1}{\gamma}w_n(\cdot,\gamma)$ and $\tilde\lambda_n(\gamma):=\tfrac{1}{\gamma}\lambda_n(\gamma)$. Then, we can rewrite \eqref{eq:limit.bellman} as
\begin{equation}\label{eq:limit.bellman2}
\textstyle \gamma \tilde w_n(x, \gamma)=\min_{a\in U}\left[\gamma c(x,a)-\gamma\tilde\lambda_n(\gamma)+\ln\sum_{y\in E}e^{\gamma\tilde w_{n+1}(y,\gamma)}\bP^a(x,y)\right]
\end{equation}
or, using the long-run risk sensitive averaged Bellman operator notation introduced in \eqref{eq:Bellman.op}, as
\begin{equation}\label{eq:vanishing7}
\tilde w_n(x,\gamma)=T_{\gamma}\tilde w_{n+1}(x,\gamma)+\tilde\lambda_n(\gamma).
\end{equation}
Now, due to \eqref{eq:vanishing4}--\eqref{eq:vanishing6}, we get that
\[
\sup_{n\in\bN}|\tilde w_n(\cdot,\gamma)|\leq \|c\| 2N+\tfrac{1}{|\gamma|}\ln K\quad \textrm{and}\quad \sup_{n\in\bN}|\tilde\lambda_n(\gamma)|\leq  \|c\|(2N+1)+\tfrac{1}{|\gamma|}\ln K,
\]
which implies joint boundedness of the family of functions $(\tilde w_n(\cdot,\gamma))_{n\in\bN}$ in the span-norm. Using Theorem~\ref{th:local.contraction}, i.e.\ the local contraction property of the operator $T_{\gamma}$, and Banach's fixed point theorem, one can show that there exists a function $w(\cdot,\gamma)\colon E\to\bR$ such that for every $n\in\bN$ we get
\begin{equation}\label{eq:banach1}
\|\tilde w_n(\cdot,\gamma)-w(\cdot,\gamma)\|_{\textnormal{sp}}=0.
\end{equation}
To show \eqref{eq:banach1}, first observe that there exists $w(\cdot,\gamma)$ such that for each $n\in\bN$ we get $\|T^k_{\gamma}\tilde w_n(\cdot,\gamma)-w(\cdot,\gamma)\|_{\textnormal{sp}}\to 0$ as $k\to\infty$; function $w(\cdot,\gamma)$ clearly does not depend on $n\in\bN$ due to \eqref{eq:vanishing7}. Also, for any $k\in\bN$, we get $\|T^k_{\gamma}\tilde w_{n+k}(\cdot,\gamma)-\tilde w_n(\cdot,\gamma)\|_{\textnormal{sp}}=0$, and, by the boundedness of family $(\tilde w_n(\cdot,\gamma))_{n\in\bN}$, $\|T^k_{\gamma}\tilde w_{n+k}(\cdot,\gamma)- w(\cdot,\gamma)\|_{\textnormal{sp}}\to 0$, as $k\to\infty$. Consequently, we have
\[
\|\tilde w_n(\cdot,\gamma)-w(\cdot,\gamma)\|_{\textnormal{sp}}\leq \|\tilde w_n(\cdot,\gamma)-T_{\gamma}^{k}\tilde w_{n+k}(\cdot,\gamma)\|_{\textnormal{sp}}+\|T_{\gamma}^{k}\tilde w_{n+k}(\cdot,\gamma)-w(\cdot,\gamma)\|_{\textnormal{sp}}\to 0,
\]
as $k\to\infty$, which proves \eqref{eq:banach1}. Next, since $w(\cdot,\gamma)$ is defined up to an additive constant, we can assume, without loss of generality, that $w(\bar z,\gamma)=0$, which implies $w(\bar z,\gamma)=\tilde w_n(\bar z,\gamma)$, for $n\in\bN$, due to \eqref{eq:vanishing4}. Thus, due to \eqref{eq:banach1}, we get $\tilde w_n(\cdot,\gamma)\equiv w(\cdot,\gamma)$, that is, the functions $\tilde w_n(\cdot,\gamma)$ and $w(\cdot,\gamma)$ are the same. In particular, for any $n\in\bN$ we can rewrite \eqref{eq:vanishing7} as
\begin{equation}\label{eq:vanishing8}
w(x,\gamma)=T_{\gamma} w(x,\gamma)+\tilde\lambda_n(\gamma),
\end{equation}
which proves that the sequence $(\lambda_n(\gamma))_{n\in\bN}$ is constant. Let $\lambda(\gamma):=\lambda_1(\gamma)$. It is easy to observe that $w(\cdot,\gamma)$ and $\lambda(\gamma)$ is a unique solution to a renormalized averaged Bellman equation \eqref{eq:Bellman1} given by
\[
\gamma w(x,\gamma)=\min_{a\in U}\left[\gamma c(x,a)-\gamma \lambda(\gamma)+\ln\sum_{y\in E}e^{\gamma w(y,\gamma)}\bP^a(x,y)\right],
\]
which additionally satisfies $w(\bar z,\gamma)=0$.
Finally, since any generic increasing sequence $(\beta_m)_{m\in\bN}$, where $\beta_m\to 1$, as $m\to\infty$, contains a subsequence $(m_k)_{k\in\bN}$ such that
\[
\tfrac{1}{\gamma}\bar w^{\beta_{m_k}}(x,\beta^n_{m_k}\gamma)\to w(x,\gamma)\quad\textrm{and}\quad \tfrac{1}{\gamma}\lambda^{\beta_{m_k}}_n(\gamma)\to \lambda(\gamma),
\]
as $k\to\infty$, we know that $\bar w^{\beta}(x,\beta^n\gamma)\to\gamma w(x,\gamma)$ and $\lambda_n^{\beta}(\gamma)\to\lambda(\gamma)$ as $\beta\to 1$, i.e., those limits exist and are well defined. Note that from \eqref{eq:vanishing8} we also directly get that $w(\cdot,\gamma)$ and $\lambda(\gamma)$ solve the long-run risk-sensitive averaged Bellman equation~\eqref{eq:Bellman1}, which concludes the proof.
\end{proof}

\subsection{Blackwell property for the discounted risk-sensitive policies}
While for $\gamma=0$ we can always find stationary Markov solution to problem \eqref{eq:RSC.disc}, for $\gamma\neq 0$ this is not the case. That being said, one expects that some form of the Blackwell property holds in the risk-sensitive framework due to the vanishing discount framework, that is, the convergence results stated in Theorem~\ref{th:vanishing.discount}.

To show this, consider the optimal policy induced by the discounted Bellman equation \eqref{eq:Bellman2}. Namely, for any $\beta\in (0,1)$ and $\gamma\neq 0$, let $\hat\pi^{\beta}:=(\hat{u}^{\beta}_0,\hat{u}^{\beta}_1,\ldots,)\in\Pi^{*}_{\gamma}(\beta)$ be the optimal Markov policy given by 
\begin{equation}\label{eq:discounted.stat}
\textstyle \hat u^{\beta}_n(x):=\argmax_{a\in U}\left[c(x,a) + \frac{1}{\gamma\beta^n}\ln \left( \sum_{y\in E} e^{w^\beta(y,\gamma \beta^{n+1})}\mathbb{P}^a(x,y)\right)\right],
\end{equation}
where $w^{\beta}(\cdot,\cdot)$ is a solution to~\eqref{eq:Bellman2}. Note that the policy $\hat\pi^{\beta}$ could be non-stationary as we might have $\hat u^{\beta}_n\not\equiv \hat u^{\beta}_m$, for $n\neq m$; see Example~\ref{ex:non-stat}. We are now ready to present the analogue of Theorem~\ref{th:blackwell} transferred to the risk-sensitive framework, which might be interpreted as the Blackwell property of the risk-sensitive optimal strategies.

\begin{theorem}\label{th:blackwell.sensitive}
Assume \eqref{A.1} and \eqref{A.2}; fix $\gamma\neq 0$. Then, for any $n\in\bN$ there is $\beta_n\in (0,1)$ such that for any $\beta\in(\beta_n,1)$ the stationary Markov policy $\hat u^{\beta}_n\in \Pi'$ defined in \eqref{eq:discounted.stat} is optimal for the averaged problem, that is, we have $\hat u^{\beta}_n\in \Pi^*_{\gamma}$. 
\end{theorem}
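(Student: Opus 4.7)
The plan is to show that for $\beta$ sufficiently close to $1$, the action $\hat u^{\beta}_n(x)$ selected by~\eqref{eq:discounted.stat} necessarily lies in the $\argmax$ of the averaged Bellman objective~\eqref{eq:Markov.stat.strategy}, which by Theorem~\ref{th:exist} is exactly the condition for $\hat u^{\beta}_n$ (used stationarily at every step) to belong to $\Pi^{*}_{\gamma}$. The argument rewrites the discounted argmax using the centered quantities of Section~\ref{sec:blackwell}, passes to the limit via Theorem~\ref{th:vanishing.discount}, and then uses finiteness of $U$ and $E$ to turn convergence of the objective into $\argmax$-inclusion.

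First, I rewrite~\eqref{eq:discounted.stat}. Using $w^{\beta}(y,\gamma\beta^{n+1})=\bar w_{n+1}^{\beta}(y,\gamma)+w^{\beta}(\bar z,\gamma\beta^{n+1})$ (cf.~\eqref{eq:vanishing1} and~\eqref{eq:vanishing3}) and factoring the $y$-independent term $w^{\beta}(\bar z,\gamma\beta^{n+1})$ out of the logarithm, the resulting additive shift $\frac{1}{\gamma\beta^n}w^{\beta}(\bar z,\gamma\beta^{n+1})$ does not affect the optimising action, so
\begin{equation}\label{eq:blackwell.proof.rewrite}
\hat u^{\beta}_n(x)=\argmax_{a\in U}\left[c(x,a)+\frac{1}{\gamma\beta^n}\ln\sum_{y\in E}e^{\bar w_{n+1}^{\beta}(y,\gamma)}\bP^a(x,y)\right].
\end{equation}

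Second, by Theorem~\ref{th:vanishing.discount} (using the scaled convergence $\bar w^{\beta}(x,\gamma\beta^{n+1})\to\gamma w(x,\gamma)$ established in its proof) together with $\gamma\beta^n\to\gamma$ and continuity of the log-sum-exp map, the bracketed expression in~\eqref{eq:blackwell.proof.rewrite} converges, as $\beta\uparrow 1$, to
\begin{equation*}
F(x,a):=c(x,a)+\frac{1}{\gamma}\ln\sum_{y\in E}e^{\gamma w(y,\gamma)}\bP^a(x,y),
\end{equation*}
and, since $U$ is finite, the convergence is uniform in $a$. The function $F(x,\cdot)$ is precisely the objective (up to the $a$-independent constant $-\lambda(\gamma)$) whose $\argmax$ characterises averaged-optimal stationary policies via~\eqref{eq:Markov.stat.strategy}. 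Setting $A^{*}(x):=\argmax_{a\in U}F(x,a)$ and $\delta(x,a):=\max_{a'\in U}F(x,a')-F(x,a)>0$ for $a\notin A^{*}(x)$, uniform convergence yields $\beta_n(x,a)\in(0,1)$ such that for $\beta\in(\beta_n(x,a),1)$ the same strict gap persists inside~\eqref{eq:blackwell.proof.rewrite}, forcing $\hat u^{\beta}_n(x)\in A^{*}(x)$. Taking $\beta_n:=\max_{x\in E}\max_{a\in U\setminus A^{*}(x)}\beta_n(x,a)\in(0,1)$ over the finite index set (with the convention $\beta_n:=0$ if every $A^{*}(x)=U$), for each $\beta\in(\beta_n,1)$ the stationary policy $\hat u^{\beta}_n$ satisfies the selector condition~\eqref{eq:Markov.stat.strategy} at every state, whence $\hat u^{\beta}_n\in\Pi^{*}_{\gamma}$ by Theorem~\ref{th:exist}.

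The main obstacle is bookkeeping: verifying that the rescaling factor $1/(\gamma\beta^n)$ in front of the logarithm interacts cleanly with the centered limit $\bar w_{n+1}^{\beta}\to\gamma w$ from Theorem~\ref{th:vanishing.discount}, and that the $\argmax$ survives the passage to the limit. No additional contraction or analyticity input is needed beyond what Theorem~\ref{th:vanishing.discount} supplies; the rest is powered by the finiteness of $U$ and $E$. The argument is insensitive to the sign of $\gamma$, since both signs produce the same rewriting after the $y$-independent additive shift is absorbed into the argmax.
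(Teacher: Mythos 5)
Your proof is correct and rests on the same two ingredients as the paper's own argument: the convergence supplied by Theorem~\ref{th:vanishing.discount} (including the $\gamma$-rescaling of the centered limit, which you correctly flag) and the finiteness of $E$ and $U$. The only difference is presentational --- the paper argues by contradiction, extracting a subsequence along which the allegedly non-optimal selector is constant and passing to the limit in the discounted Bellman recursion \eqref{eq:bellman.vanishing2}, whereas you argue directly via uniform convergence of the argmax objectives and persistence of the optimality gap; both routes are sound and essentially equivalent.
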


\begin{proof}
Fix $\gamma<0$; the proof for $\gamma>0$ is similar. Assume that there exists $n\in\bN$, for which the claim of Theorem~\ref{th:blackwell.sensitive} is not true. Then, there exists an increasing sequence $(\beta_m)_{m\in\bN}$ of numbers in $(0,1)$ such that $\beta_m\to 1$, as $m\to\infty$, and a sequence $(\hat u^{\beta_m}_n)_{m\in\bN}$ such that, for every $m\in\bN$, the Bellman equation \eqref{eq:Bellman1} is not satisfied. Namely, we get that for, any $m\in\bN$, the renormalized Bellman equation
\begin{equation}\label{eq:blackwell.not1}
\textstyle w(x,\gamma)=\gamma c(x,\hat u^{\beta_m}_n(x))-\lambda(\gamma)+\ln\sum_{y\in E}e^{w(y,\gamma)}\bP^{\hat u^{\beta_m}_n(x)}(x,y)
\end{equation}
does not hold for at least one $x\in E$, where $\tfrac{1}{\gamma} w(\cdot,\gamma)$ and $\tfrac{1}{\gamma}\lambda(\gamma)$ is a unique solution to \eqref{eq:Bellman1} satisfying $w(\bar z,\gamma)=0$. Now, we can choose an increasing subsequence $(\beta_{m_k})_{k\in\bN}$ such that for some $\hat u\in\Pi'$ we have $\hat u^{\beta_{m_k}}_n(\cdot)\to \hat u(\cdot)$, as $k\to\infty$. Furthermore, since $U$ and $E$ are finite, there exists $k_0\in\bN$ such that $\hat u^{\beta_{m_k}}_n\equiv \hat u$ for $k\geq k_0$. Thus, recalling \eqref{eq:bellman.vanishing2}, we know that for any $k\geq k_0$ we have
\[
\textstyle \bar w_n^{\beta_{m_k}}(x, \gamma)=\gamma\beta_{m_k}^n c(x,\hat u(x))-\lambda_n^{\beta_{m_k}}(\gamma)+\ln\sum_{y\in E}e^{\bar w_{n+1}^{\beta_{m_k}}(y,\gamma)}\bP^{\hat u(x)}(x,y),\quad x\in E.
\]
Letting $k\to\infty$ and using Theorem~\ref{th:vanishing.discount}, we get
\[
\textstyle w(x, \gamma)=\gamma c(x,\hat u(x))-\lambda(\gamma)+\ln\sum_{y\in E}e^{ w(y,\gamma)}\bP^{\hat u(x)}(x,y),\quad x\in E,
\]
which, together with equality $\hat u\equiv u^{\beta_{m_{k_0}}}_n$, contradicts the fact that \eqref{eq:blackwell.not1} should not hold for at least one $x\in E$. This concludes the proof.
\end{proof}
From Theorem~\ref{th:blackwell.sensitive} we see that one can recover a long-run average optimal policy from the discounted setup by setting $n=1$ and recovering $\hat u^{\beta}_1(\cdot)$ for sufficiently large $\beta$. Although this might seem counterintuitive at first sight, one should expect that for $\beta$ close to $1$ the discounted optimal policy stabilizes. For instance, assuming there is a unique optimal policy for the long-run averaged problem, for large $\beta$s we get $u_1^{\beta}\equiv u$ and one expects
\[
\hat \pi^{\beta} \to (u^{\beta}_1,u^{\beta}_1,u^{\beta}_1,u^{\beta}_1,\ldots),\quad \textrm{as }\quad \beta\to 1,
\]
as ensured by Theorem~\ref{th:blackwell.sensitive}. See Example~\ref{ex:non-stat}, where this is illustrated. It is also worthwhile to note that the ideas presented in the proofs of Theorem~\ref{th:vanishing.discount} and Theorem~\ref{th:blackwell.sensitive} can be used to prove Theorem~\ref{th:blackwell}, i.e. our approach can be adapted to the risk-neutral case to recover the classical Blackwell property. For completeness, we present the sketch of the proof for the risk-neutral case in Appendix~\ref{S:Blackwell.neutral}.

\begin{remark}[Relation between Blackwell property and ultimate stationarity]\label{rem:ultimate.stationarity}
In \cite{Jaq1973,Jaq1976}, the concept of ultimate stationarity has been introduced and studied in reference to optimal strategies for the discounted long run risk-sensitive problem. A policy $\pi^{\beta}$ is {\it ultimately stationary} if it can represented as  $\pi^{\beta}=({u}^{\beta}_0,{u}^{\beta}_1,\ldots, u^{\beta}_{N},{u}^{\beta}_{\infty},{u}^{\beta}_{\infty},\ldots)$, for some Markov stationary policy $\hat u^{\beta}_{\infty}\in \Pi'$ and $N\in\bN$. Furthermore, it has been shown that for  any $\beta<1$ and $\gamma>0$ there is an ultimately stationary policy that is optimal for $\beta$-discounted risk-sensitive problem. We note that this concept is different than the Blackwell property posted in this paper. In particular, while Theorem~\ref{th:blackwell.sensitive} is linked to initial steps policies, the results from \cite{Jaq1973,Jaq1976} is related to (future) tail step policies. Furthermore, note that the number of steps $N$ for the ultimate stationarity depends on the parameter $\beta\in (0,1)$ and when $\beta\to 1$, then one might get $N\to \infty$. Consequently, the ultimately stationary (tail) discounted policies might fail to be optimal for the averaged problem, see Example~\ref{ex:non-stat} where this is illustrated.
\end{remark}

\section{Examples}\label{S:examples} In this section, we provide a series of examples that complement the results of the paper. In particular, we provide a series of specific controlled MDP frameworks which discriminate policies in the risk-neutral and risk-sensitive case, show that the set $\Gamma(u)$ might not be compact, and illustrate the Blackwell property in the setting in which there is no stationary Markov policy in the discounted framework.

\begin{example}[Risk-neutral optimal policy that is not optimal in the risk-sensitive case]\label{ex:1}
Let $E=\{-1,0,1\}$, $U=\{1,2,3\}$, and let the (controlled) transition matrices be given by
\[
P_1=\begin{bmatrix}
0.1 & 0.8 & 0.1 \\
0.1 & 0.8 & 0.1\\
0.1 & 0.8 & 0.1
\end{bmatrix},\,
P_2=\begin{bmatrix}
0.2 & 0.6 & 0.2\\
0.2 & 0.6 & 0.2\\
0.2 & 0.6 & 0.2
\end{bmatrix},\,
P_3=\begin{bmatrix}
0.3 & 0.4 & 0.3\\
0.3 & 0.4 & 0.3\\
0.3 & 0.4 & 0.3
\end{bmatrix}.
\]
Note that both \eqref{A.1} and \eqref{A.2} are satisfied. Let the function $c\colon E\times U\to\bR$ be given by $c(x,a)=x$.  Then, we get
\[
\Gamma(u_1)=(-\infty,0],\quad \Gamma(u_2)=\{0\},\quad \Gamma(u_3)=[0,\infty),
\]
which shows that the optimal risk-neutral policy does not need to be optimal for the risk-sensitive case with small risk aversion. To show this, let us first note that in the risk-neutral case a single-step expected reward is always equal to zero, regardless of the action taken, so that $\Pi^*_{0} \supset \Pi'$. This shows that every Markov policy is optimal. In particular, this refers to a constant Markov policy $u_2\equiv 2$. On the other hand, for $\gamma\neq 0$, we can find optimal policy by analysing the Bellman equation \eqref{eq:Bellman1} given by
\[
\textstyle w(x,\gamma)=x-\lambda(\gamma) +\sup_{a\in U}\left[\frac{1}{\gamma}\ln\sum_{y\in E}e^{\gamma w(y,\gamma)}\bP^a(x,y)\right].
\]
Setting
\begin{equation}\label{eq:K.ex1} 
\textstyle  K^a(\gamma):=\frac{1}{\gamma}\ln\sum_{y\in E}e^{\gamma w(y,\gamma)}\bP^a(x,y),\quad \gamma\neq 0,
\end{equation}
 observing that \eqref{eq:K.ex1} is independent of $x$, and assuming (without loss of generality) that $w(0,\gamma)=0$, we get $\lambda(\gamma)=\sup_{a\in U}K^a(\gamma)$ as well as $w(1,\gamma)=1$ and $w(-1,\gamma)=-1$, as  $\textstyle w(x,\gamma)-x$ is a constant. Thus, noting that
\[
K^a(\gamma)= \begin{cases}
\frac{1}{\gamma}\ln\left[1e^{-\gamma}+8e^{0}+1e^{\gamma}\right]-\frac{\ln 10}{\gamma}& \textrm{if } a=1,\\
\frac{1}{\gamma}\ln\left[2e^{-\gamma}+6e^{0}+2e^{\gamma}\right]-\frac{\ln 10}{\gamma}& \textrm{if } a=2,\\
\frac{1}{\gamma}\ln\left[3e^{-\gamma}+4e^{0}+3e^{\gamma}\right]-\frac{\ln 10}{\gamma}& \textrm{if } a=3.
\end{cases}
\]
and introducing $g(x):=e^{-x}+e^{x}$, we can rewrite \eqref{eq:K.ex1} as
\begin{align*}
K^a(\gamma)& =\tfrac{1}{\gamma}\ln\left[ag(\gamma)+(5-a)g(0)\right] -\tfrac{\ln 10}{\gamma}\\
& =\tfrac{1}{\gamma}\ln\left[a(g(\gamma)-g(0))+5g(0)\right] -\tfrac{\ln 10}{\gamma}.
\end{align*}
As $g(\gamma)>g(0)>1$, we get $\gamma K^1(\gamma) <\gamma K^2(\gamma)<\gamma K^3(\gamma)$, for any $\gamma\neq 0$. This implies that for $\gamma\leq 0$ the strategy $u_1\equiv 1$ is optimal, while for $\gamma\geq 0$ the strategy $u_3\equiv 3$ is optimal. For completeness, in Figure \ref{fig:EX01} we provide a graphical illustration of mapping $\gamma \to \lambda^u(\gamma)$ for the considered policies.
\end{example}

\begin{example}[Non-compactness of the set $\Gamma(u)$]\label{ex:2}
Let $E=\{0,1,2,3\}$, $U=\{1,2\}$ and let the (controlled) transition matrices be given by
\[
P_1=\begin{bmatrix}
0.2 & 0.1  & 0.5 & 0.2 \\
0.2 & 0.1  & 0.5 & 0.2 \\
0.2 & 0.1  & 0.5 & 0.2 \\
0.2 & 0.1  & 0.5 & 0.2
\end{bmatrix},\quad
P_2=\begin{bmatrix}
0.1 & 0.5  & 0.1 & 0.3 \\
0.1 & 0.5  & 0.1 & 0.3 \\
0.1 & 0.5  & 0.1 & 0.3 \\
0.1 & 0.5  & 0.1 & 0.3 
\end{bmatrix}.
\]
Note that both \eqref{A.1} and \eqref{A.2} are satisfied. Let the function $c\colon E\times U\to\bR$ be given by $c(x,a)=x$. Following similar reasoning as in Example~\ref{ex:1}, for $u_1\equiv 1$ and $u_2\equiv 2$ we get
\[
\Gamma(u_1)=\left[\ln\tfrac{3-\sqrt{5}}{2},\ln\tfrac{3+\sqrt{5}}{2}\right],\quad \Gamma(u_2)=\left(-\infty,\ln\tfrac{3-\sqrt{5}}{2}\right] \cup \left[\ln\tfrac{3+\sqrt{5}}{2},\infty\right),
\]
which shows that the set $\Gamma(u)$ does not need to be compact. For completeness, in Figure \ref{fig:EX01} we provide a graphical illustration of mapping $\gamma \to \lambda^u(\gamma)$ for the considered policies.
\end{example}

\begin{example}[Existence of isolated optimal strategies]\label{ex:3}
Let $E=\{-2,-1,1,2\}$, $U=\{1,2\}$ and let the (controlled) transition matrices by given by
\[
P_1=\begin{bmatrix}
0.2 & 0.3  & 0.3 & 0.2 \\
0.2 & 0.3  & 0.3 & 0.2 \\
0.2 & 0.3  & 0.3 & 0.2 \\
0.2 & 0.3  & 0.3 & 0.2
\end{bmatrix},\quad
P_2=\begin{bmatrix}
0.1 & 0.5  &0.1 & 0.3 \\
0.1 & 0.5  &0.1 & 0.3 \\
0.1 & 0.5  &0.1 & 0.3 \\
0.1 & 0.5  &0.1 & 0.3
\end{bmatrix}.
\]
Note that both \eqref{A.1} and \eqref{A.2} are satisfied. Let the function $c\colon E\times U\to\bR$ be given by $c(x,a)=x$. Following similar reasoning as in Example~\ref{ex:1}, for $u_1\equiv 1$ and $u_2\equiv 2$ we get
\[
\Gamma(u_1)=\{0\},\quad \Gamma(u_2)=\bR,
\]
which shows that there could exist an isolated optimal strategy. For completeness, in Figure \ref{fig:EX01} we provide a graphical illustration of mapping $\gamma \to \lambda^u(\gamma)$ for the considered policies.
\end{example}

\begin{figure}[htp!]
    \centering
\includegraphics[width=0.33\textwidth]{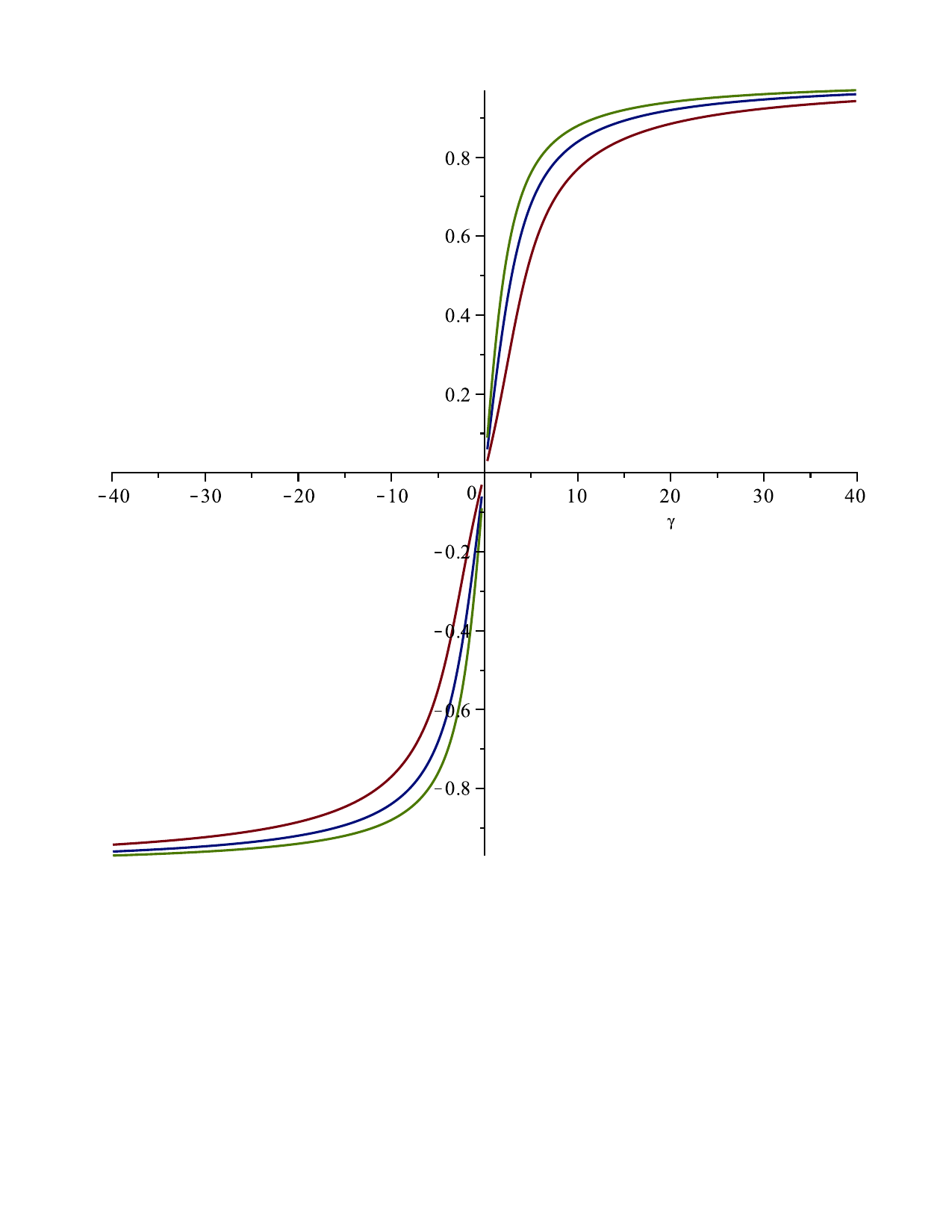}\includegraphics[width=0.33\textwidth]{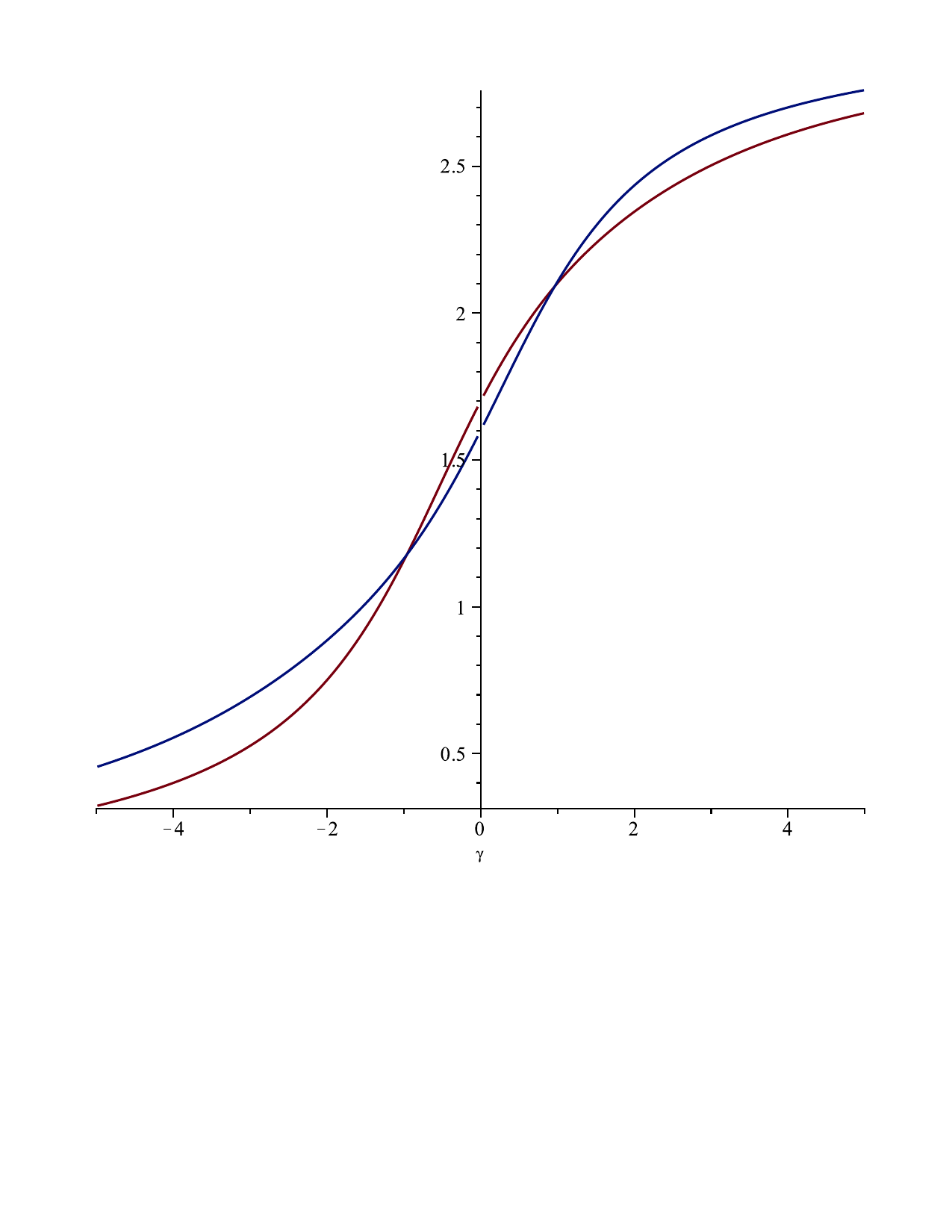}\includegraphics[width=0.33\textwidth]{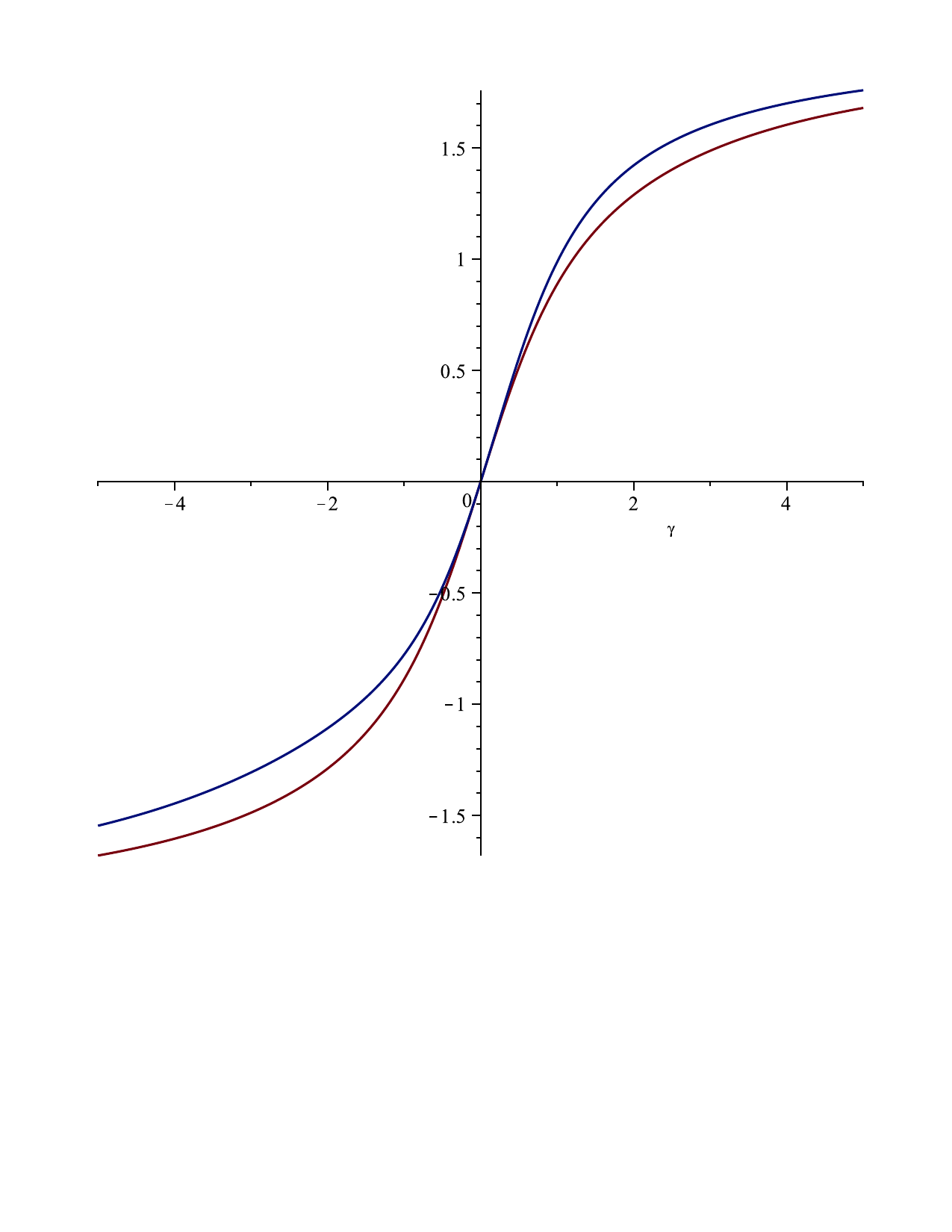}\vspace*{-1cm}
    \caption{Graphs of $\gamma \to \lambda^u(\gamma)$ for MDP specifications from Example~\ref{ex:1} (left), Example~\ref{ex:2} (middle), and Example~\ref{ex:3} (right). The performance of various Markov stationary policies is illustrated: $u_1\equiv 1$ (red), $u_1\equiv 2$ (blue), $u_3\equiv 3$ (green); note that policy $u_3$ applies only to Example~\ref{ex:1}.}
    \label{fig:EX01}
\end{figure}

\begin{example}[Non-stationary optimal policy for the discounted problem and Blackwell optimality]\label{ex:non-stat}
Let $E=\{1,2,3\}$, $U=\{0,1\}$ and let the (controlled) transition matrices be given by
\[
P_1=\begin{bmatrix}
2\epsilon & 0.5-\epsilon  & 0.5-\epsilon \\
1 & 0  & 0 \\
1 & 0  & 0 \\
\end{bmatrix},\quad
P_2=\begin{bmatrix}
2\epsilon & 0.9-\epsilon  & 0.1-\epsilon \\
1 & 0  & 0 \\
1 & 0  & 0 \\
\end{bmatrix},
\]
where $\epsilon\in (0,0.1)$. Let the function $c\colon E\times U\to\bR$ be given by
\[
c(x,1)=
\begin{cases}
0& \textrm{if } x=1\\
0& \textrm{if } x=2\\
8& \textrm{if } x=3
\end{cases}\quad\textrm{and}\quad 
c(x,2)=
\begin{cases}
1& \textrm{if } x=1\\
0& \textrm{if } x=2\\
8& \textrm{if } x=3
\end{cases}.
\]
This could be seen as a modification of the example presented in~\cite{Jaq1976} in which the introduction of $\epsilon>0$ ensures that assumptions \eqref{A.1} and \eqref{A.2} are satisfied. In this example, we have eight Markov stationary policies. Noting that $p_1(2,\cdot)\equiv p_2(2,\cdot)$, $p_1(3,\cdot)\equiv p_3(2,\cdot)$,  $c(2,\cdot)\equiv 0$, and $c(3,\cdot)\equiv 8$, we conclude that it does not matter which action is assigned to state $2$ and $3$. Consequently, without loss of generality, it is sufficient to consider two Markov policies given by $u(\cdot):=0$ and $\tilde u(\cdot):=1$. 

First, let us show that for sufficiently small $\epsilon>0$ a non-stationary policy is optimal. In fact, it is sufficient to consider the limit case $\epsilon=0$. This is due to the fact that the objective value function is continuous with respect to $\epsilon$ for any fixed policy and because the optimal non-stationary policy (for $\epsilon=0$) will give value strictly larger than any stationary policy; for brevity, we omit detailed proof of this fact (one can control the value for the first $N\in\bN$ sum constituents by modifying the size of $\epsilon$, while the tail sum total value can be controlled by the choice of $N$ due to the discounting scheme). Thus, fixing $\epsilon=0$, initial state $x=1$, following~\cite{Jaq1976}, and noting that for both $u$ and $\tilde u$, the corresponding Markov processes can be seen as a sequence of independent (2-step) gambles, we know that for any $\gamma\in\bR\setminus\{0\}$ and $\beta\in (0,1)$ we get
\begin{align*}
J_{\gamma}(1,u;\beta) &=\ent_1^{u}\left(\sum_{i=0}^{\infty}\beta^{(2i+1)}8 Z_i,\gamma\right) &=\sum_{i=0}^{\infty}\frac{1}{\gamma}\ln\left(0.5e^0+0.5e^{\gamma\beta^{2i+1}8} \right),&\\
J_{\gamma}(1,\tilde u;\beta)& =\ent_1^{\tilde u}\left(\sum_{i=0}^{\infty}\beta^{(2i+1)}8 Z_i,\gamma\right)+\sum_{i=0}^{\infty}\beta^{(2i)} &=\sum_{i=0}^{\infty}\frac{1}{\gamma}\ln\left(0.9e^0+0.1e^{\gamma\beta^{2i+1}8} \right),&
\end{align*}
where $(Z_n)_{n\in\bN}$ is a sequence of i.i.d. random variables such that $Z_n \sim B(1,0.5)$ under $\bP_{1}^u$ and $Z_n \sim B(1,0.1)$ under $\bP_{1}^{\tilde u}$; recall that entropic utility is additive with respect to independent random variables (and constants). Moreover, using the similar logic, we get that the value function for the non-stationary policy $\pi:=(\tilde u, u, u,\ldots)$ is equal to 
\[
\textstyle J_{\gamma}(1,\pi;\beta) =\frac{1}{\gamma}\ln\left(0.9e^0+0.1e^{\gamma\beta 8} \right)+1+\sum_{i=1}^{\infty}\frac{1}{\gamma}\ln\left(0.5e^0+0.5e^{\gamma\beta^{2i+1}8} \right).
\]
In particular, fixing $\gamma=-1$ and $\beta=\tfrac{1}{2}$, by direct computations, we get
\[
J_{-1}(1,u;\tfrac{1}{2})\approx 1.21,\quad  J_{-1}(1,\tilde u;\tfrac{1}{2})\approx 1.53,\quad \textrm{and}\quad J_{-1}(1,\pi;\tfrac{1}{2})\approx 1.64,
\]
which shows that the objective value over non-stationary policies is strictly bigger than the value of any stationary policy. Furthermore, we can recover the optimal non-stationary policy by confronting (individually) sum constituents size resulting from applying $\tilde u$ and $u$ in individual steps. Noting that the positive real root of the function $f(s):=0.5e^{0}+0.5e^{-4s}-0.9e^{-s}-0.1e^{-5s}$ is in the point $s\approx 0.456$ and $\beta^{2\cdot 1}=0.25 <s$, we find that it is optimal to apply $\tilde u$ only in the first step, and then follow $u$; this shows that policy $\pi$ is in fact optimal in the class of all non-stationary Markov policy, for $\gamma=-1$ and $\beta=\tfrac{1}{2}$.

Second, let us provide the link between the considered example and the Blackwell optimality property. Using similar logic, one can show that for $\gamma=-1$ and any $\beta>\tfrac{1}{2}$, the optimal policy is non-stationary and of the form $\pi=(\tilde u,\ldots,\tilde u, u,u,\ldots)$, where the policy change point depends on $\beta$ and includes more (initial) executions of policy $\tilde u $ for large values of $\beta$. In particular, we see that in the limit case ($\beta\to 1$), the policy $\tilde u$ should be optimal. This is indeed the case, as for the averaged criterion we get
\begin{align*}
J_{-1}(1,u;1) &=\lim_{n\to\infty}\frac{1}{2n}\sum_{i=0}^{n-1}\ent_1^{u}\left(8 Z_i,\gamma\right)=-\tfrac{1}{2} \ln\left(0.5e^0+0.5e^{-8}\right)\approx 0.35,\\
J_{-1}(1,\tilde u;1) &=\lim_{n\to\infty}\frac{1}{2n}\sum_{i=0}^{n-1}\ent_1^{\tilde u}\left(8 Z_i+1,\gamma\right)=-\tfrac{1}{2} \ln\left(0.9e^0+0.1e^{-8}\right)+\tfrac{1}{2}\approx 0.67.\end{align*}
To sum up, we see that when $\beta\to 1$, then the policy $\tilde u$ becomes Blackwell optimal, eventually for any fixed execution step, as indicated by Theorem~\ref{th:blackwell.sensitive}. Note that this example also shows that the ultimately stationary tail policy (i.e. $u$) might fail to be optimal in the average case, as stated in Remark~\ref{rem:ultimate.stationarity}.
\end{example}

\appendix
\section{Alternative proof of the classical Blackwell property}\label{S:Blackwell.neutral}
In this section we provide a sketch of an alternative proof of the classical Blackwell property, i.e. Theorem~\ref{th:blackwell}, that is based on argumentation presented in Theorem~\ref{th:vanishing.discount} and Theorem~\ref{th:blackwell.sensitive}; the proof does not rely on the analytical/polynomial properties of the underlying eigenvalues or the Laurent series expansion, see also \cite{cavazos1999direct}.

\medskip
\begin{proof}[Proof sketch (Theorem~\ref{th:blackwell})]  Let us fix $\gamma=0$. From Theorem~\ref{th:exist} we know that under \eqref{A.1} there exists a unique solution (up to an additive constant) to the risk-neutral Bellman equation~\eqref{eq:Bellman1}, i.e. we can solve equation
\begin{equation}\label{eq:app0}
\textstyle w(x,0)+\lambda(0)=\max_{a\in U}\left[c(x,a)+\sum_{y\in E} w(y,0)\bP^a(x,y)\right]; \tag{AP.1}
\end{equation}
note that for risk-neutral case we do not require assumption \eqref{A.2}. Furthermore, one can show that for any $\beta\in (0,1)$ there always exists a stationary solution to the risk-neutral discounted Bellman equation \eqref{eq:Bellman2}, that is, there is $w^{\beta}$ such that
\begin{equation}\label{eq:app1}
\textstyle w^{\beta}(x,0)=\max_{a\in U}\left[c(x,a)+\beta\sum_{y\in E} w^{\beta}(y,0)\bP^a(x,y)\right].
\tag{AP.2}
\end{equation}
Under \eqref{A.1}, we get that $\| w_{\beta}\|_{span}\leq 2\|c\|/(1-\beta\Delta)\leq 2\|c\|/(1-\Delta)$, so that the family $(w^{\beta})_{\beta\in (0,1)}$ is uniformly bounded w.r.t.\ ~$\beta$, cf. Proposition~\ref{pr:w.disc.bound}. We can also consider the modified (centered) solution given by $\bar w_{\beta}(x):=w_{\beta}(x)-w_{\beta}(\bar z)$ for some $\bar z\in E$, and apply the vanishing discount proof approach from Theorem~\ref{th:vanishing.discount}. Namely, noting that we have
\begin{equation}\label{eq:app2}
\textstyle \bar w^{\beta}(x,0)+(1-\beta)w^{\beta}(\bar z,0)=\max_{a\in U}\left[c(x,a)+\beta\sum_{y\in E} \bar w^{\beta}(y,0)\bP^a(x,y)\right],
\tag{AP.3}
\end{equation}
and considering appropriate sequence $\beta_n\uparrow 1$, we can recover \eqref{eq:app0} from \eqref{eq:app2} with $\bar w^{\beta_n}(x)\to w(x)$ and $\lambda^{\beta_n}(0):=(1-\beta_n)w^{\beta_n}(\bar z,0) \to \lambda(0)$. Finally, recalling joint boundedness of $(\bar w_{\beta_n})_{n\in\bN}$, the fact that $\bar w^{\beta_n}(\bar z)=0$, and using arguments from Theorem~\ref{th:blackwell.sensitive}, we can recover the classical Blackwell property. Note that since all the solutions are stationary, the proof simplifies, i.e. it is sufficient to consider Theorem~\ref{th:blackwell.sensitive} for fixed $n=1$ for the risk-neutral case.
\end{proof}

\bibliographystyle{siamplain}
\bibliography{RSC_bibliografia}
\end{document}